\newtheorem{thm}{Theorem}[section]
\newtheorem{lem}[thm]{Lemma}
\newtheorem{cor}[thm]{Corollary}
\newtheorem{prop}[thm]{Proposition}
\theoremstyle{definition}
\newtheorem{defn}[thm]{Definition}
\newtheorem{rem}[thm]{Remark}
\newcommand{\Z}{\mathbb{Z}}
\begin{document}

\title[Non-holomorphic Lefschetz fibrations with $(-1)$-sections]
{Non-holomorphic Lefschetz fibrations with $(-1)$-sections}

\author[N.~Hamada]{Noriyuki~Hamada}
\address{Graduate School of Mathematical Sciences, The University of Tokyo, 3-8-1 Komaba, Meguro-ku, Tokyo, 153-8914, Japan}
\email{nhamada@ms.u-tokyo.ac.jp}
\author[R.~Kobayashi]{Ryoma~Kobayashi}
\address{Department of General Education, Ishikawa National College of Technology, Tsubata, Ishikawa, 929-0392, Japan}
\email{kobayashi\_ryoma@ishikawa-nct.ac.jp}
\author[N.~Monden]{Naoyuki~Monden}
\address{Department of Engineering Science, Osaka Electro-Communication University, Hatsu-cho 18-8, Neyagawa, 572-8530, Japan}
\email{monden@osakac.ac.jp}

\begin{abstract}
We construct two types of non-holomorphic Lefschetz fibrations over $S^2$ with $(-1)$-sections ---hence, they are fiber sum indecomposable--- by giving the corresponding positive relators. 
One type of the two does not satisfy the slope inequality (a necessary condition for a fibration to be holomorphic) and has a simply-connected total space, and the other has a total space that cannot admit any complex structure in the first place.  
These give an alternative existence proof for non-holomorphic Lefschetz pencils without Donaldson's theorem.
\end{abstract}

\maketitle

\setcounter{secnumdepth}{2}
\setcounter{section}{0}


\section{Introduction}
The notion of Lefschetz fibrations in the smooth category was introduced by Moishezon \cite{Moishezon} from algebraic geometry to study complex surfaces from topological viewpoint.
It is therefore natural to ask how far smooth(symplectic) Lefschetz fibrations are from holomorphic ones. 
One approach to this question is to construct various non-holomorphic examples. 
Motivated by this, we give the following results. 
\begin{thm}\label{thm3}
For each $g\geq 3$, there is a genus-$g$ non-holomorphic Lefschetz fibration $X\to S^2$ with a $(-1)$-section and $\pi_1(X)=1$ such that it does not satisfy the ``slope inequality". 
\end{thm}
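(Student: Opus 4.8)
The plan is to pass to the mapping class group picture and to exhibit, for each $g\ge 3$, an explicit positive factorization with the required extra properties.

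Recall the standard dictionary: a genus-$g$ Lefschetz fibration over $S^2$ is the same datum as a positive relator $W=t_{c_1}t_{c_2}\cdots t_{c_n}=1$ in the mapping class group $\mathrm{Mod}(\Sigma_g)$, where the $t_{c_i}$ are right-handed Dehn twists about the vanishing cycles $c_i$; the fibration carries a section of self-intersection $-1$ exactly when $W$ lifts to a positive factorization of the boundary Dehn twist in $\mathrm{Mod}(\Sigma_g^1)$, the total space satisfies $e(X)=4-4g+n$ and $\pi_1(X)=\pi_1(\Sigma_g)/\langle\langle c_1,\dots,c_n\rangle\rangle$, and $\sigma(X)$ is determined by the isotopy classes $c_1,\dots,c_n$. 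The slope-inequality data of $X$ are $K_{X/S^2}^2=c_1^2(X)+8(g-1)$ and $\chi_f=\tfrac1{12}(c_1^2(X)+e(X))+(g-1)$; using $c_1^2(X)=2e(X)+3\sigma(X)$ these collapse to the clean identities $K_{X/S^2}^2=2n+3\sigma(X)$ and $\chi_f=\tfrac14(n+\sigma(X))$, so that the slope inequality $K_{X/S^2}^2\ge\tfrac{4(g-1)}{g}\chi_f$ is equivalent to
\[
(g+1)\,n+(2g+1)\,\sigma(X)\ \ge\ 0 .
\]
Since this inequality holds for every relatively minimal holomorphic fibration of fibre genus $\ge 2$ over a curve, it suffices to produce, for each $g\ge 3$, a positive relator of genus $g$ with: no nullhomotopic vanishing cycle (so $X$ is relatively minimal); a lift to $\mathrm{Mod}(\Sigma_g^1)$ (so $X$ has a $(-1)$-section, hence is fibre-sum indecomposable and arises from a pencil); $\pi_1(X)=1$; and $\sigma(X)<-\tfrac{g+1}{2g+1}\,n$.

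For the construction I would start from a holomorphic genus-$g$ Lefschetz fibration that is simply connected, carries a $(-1)$-section, and lies on or just above the slope line --- for instance one obtained by blowing up all base points of a holomorphic Lefschetz pencil (a pencil of plane curves when $g$ is a triangular number, and a suitable holomorphic pencil in general), so that its relator and its invariants $e$, $\sigma$ are known from complex geometry. I would then modify the relator by monodromy substitutions that lower the signature while fixing $\chi_f$. The model move is the reverse of a lantern substitution: after Hurwitz moves expose a subword $t_xt_yt_z$ in which $x,y,z$ are the three interior curves of an embedded four-holed-sphere configuration, replace it by $t_at_bt_ct_d$, the product of the twists about the four boundary curves. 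Because the lantern relation underlies the rational blowdown of a $(-4)$-sphere, each such substitution sends $(n,\sigma)\mapsto(n+1,\sigma-1)$, hence fixes $\chi_f=\tfrac14(n+\sigma)$ and strictly decreases $(g+1)n+(2g+1)\sigma$ by $g$; so one or two such moves break the displayed inequality, and the outcome is necessarily non-hyperelliptic (Endo's signature formula shows a hyperelliptic fibration can never violate the slope inequality). One must arrange the four-holed spheres so that the section curve can be isotoped off each of them --- then the substitution lifts to $\mathrm{Mod}(\Sigma_g^1)$ and the $(-1)$-section survives --- so that none of the new boundary curves is nullhomotopic, and so that a subcollection of vanishing cycles already normally generating $\pi_1(\Sigma_g)$ is left untouched. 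Alternatively one may build $W_g$ directly by a breeding / partial-fibre-sum construction on copies of a fixed low-genus pencil, again landing a genus-$g$ relator below the slope line.

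The main obstacle is producing the relator so that all four requirements hold simultaneously: positivity and the $(-1)$-section lift are both rigid, and one must at the same time locate honest lantern configurations with essential boundary curves whose neighbourhoods avoid both the section and the $\pi_1$-generating curves --- a delicate, mostly combinatorial task to be settled by explicit Hurwitz calculus. Once an explicit $W_g$ is written down, the rest is routine: $e(X)$ is read off the word length; $\sigma(X)$ is computed by adding the known changes ($-1$ per lantern move) to the signature of the holomorphic starting fibration (or via Ozbagci's algorithm, or a direct handle computation); $\pi_1(X)=1$ and relative minimality are checked on the vanishing-cycle list; and then $(g+1)n+(2g+1)\sigma(X)<0$ is arithmetic in $g$. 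The failure of the slope inequality gives non-holomorphicity of both the fibration and (after blowing down the $(-1)$-section) the associated pencil, with no appeal to Seiberg--Witten theory or Donaldson's theorem.
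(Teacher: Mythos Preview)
Your overall strategy is precisely the paper's: start from a simply-connected genus-$g$ fibration with a $(-1)$-section that sits exactly on the slope line, and apply a single inverse-lantern substitution to push $(g+1)n+(2g+1)\sigma$ down by $g$. The paper's starting relator is in fact the even chain relator $C_{2g}=(t_{A_1}\cdots t_{A_{2g}})^{4g+2}t_{a_{g+1}}^{-1}$ in $\Gamma_g^1$, which is hyperelliptic with all vanishing cycles nonseparating, so Endo's formula gives $(g+1)n+(2g+1)\sigma=0$ on the nose---this is the concrete ``holomorphic pencil'' you left unspecified for general $g$.

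The genuine gap is the step you yourself flag as ``the main obstacle'': locating an honest lantern configuration inside the positive factorization while preserving the lift to $\Gamma_g^1$, relative minimality, and simple connectivity. In the chain factorization $C_{2g}$ every twist is along one of $A_1,\dots,A_{2g}$, and no three of these curves bound a four-holed sphere with essential boundary, so Hurwitz moves alone will not expose the needed subword $t_xt_yt_z$. The paper's key device, which your outline lacks, is a chain-in/chain-out manoeuvre: use braid relations to rewrite $C_{2g}$ so that two copies of $(t_{A_1}\cdots t_{A_{2g-1}})^{2g}$ appear, replace each by $t_{a_g}t_{a'_g}$ via the odd-chain relator $C_{2g-1}^{-1}$, conjugate the resulting short relator $H$ by a diffeomorphism $\psi_1$ fixing $a_g,a'_g$, and then substitute \emph{back} via $C_{2g-1}^{\psi_2}$ and $C_{2g-1}^{\psi_3}$ with $\psi_2,\psi_3$ chosen so that three specific twists $t_{e_1},t_{a_2},t_{e_2}$ now appear whose curves are the interior curves of a lantern in $\Sigma_g^1$ with all four boundary curves nonseparating. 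Because the odd-chain substitutions come in cancelling $\pm$ pairs they contribute nothing to $\sigma$, and only the final $L^{-1}$ drops it by $1$. Without this trick (or an equivalent explicit construction) your plan remains a correct heuristic but not a proof.
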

\begin{thm}\label{thm2}
For each $g\geq 4$, there is a family of genus-$g$ non-holomorphic Lefschetz fibrations $X_{\widehat{U}_n} \to S^2$ with two disjoint $(-1)$-sections (for each positive integer $n$) such that $X_{\widehat{U}_n}$ does not admit any complex structure with either orientation and is not homotopically equivalent to $X_{\widehat{U}_m}$ when $n\neq m$.
\end{thm}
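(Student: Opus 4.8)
The plan is to realize each $X_{\widehat U_n}$ as the total space of an explicit Lefschetz fibration presented by a positive relator, and then to extract from that relator the invariants needed both to distinguish the $X_{\widehat U_n}$ from one another and to obstruct complex structures. Recall that a genus-$g$ Lefschetz fibration over $S^2$ with two disjoint sections of self-intersection $-1$ is equivalent to a positive relator in the mapping class group $\mathrm{Mod}(\Sigma_{g,2})$ of a genus-$g$ surface with two boundary components whose product is the product $t_{\delta_1}t_{\delta_2}$ of the two boundary Dehn twists; blowing down the two $(-1)$-sections then yields a genus-$g$ Lefschetz pencil on a closed $4$-manifold $Z_n$, with $X_{\widehat U_n}=Z_n\#2\CPb$. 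I would therefore build, for each $n$, a positive relator $U_n$ in $\mathrm{Mod}(\Sigma_g)$ from a fixed ``core'' together with auxiliary sub-relators coming from chain and/or lantern relations, in such a way that (i) the number $\ell(n)$ of vanishing cycles depends non-trivially (affinely) on $n$, and (ii) a sufficiently large share of the vanishing cycles are separating curves. I would then lift $U_n$ to $\mathrm{Mod}(\Sigma_{g,2})$ and apply the lantern relation (or an odd chain relation) twice to split off two boundary-parallel twists, rewriting the lift as a positive word times $t_{\delta_1}t_{\delta_2}$; this is $\widehat U_n$, and it defines $X_{\widehat U_n}\to S^2$ with its two disjoint $(-1)$-sections. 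The hypothesis $g\ge 4$ is what provides enough room to place the auxiliary sub-relators on disjoint subsurfaces and to perform the two lantern substitutions.

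Given $\widehat U_n$, I would compute $e(X_{\widehat U_n})=4-4g+\ell(n)$, the signature $\sigma(X_{\widehat U_n})$ via Meyer's signature cocycle (equivalently, by summing the local signature contributions of the vanishing cycles, separating ones counting positively), and $\pi_1(X_{\widehat U_n})$ as the quotient of $\pi_1(\Sigma_g)$ by the vanishing cycles. Since $\ell$ is non-constant, the $e(X_{\widehat U_n})$ are pairwise distinct, so no two of the $X_{\widehat U_n}$ are homotopy equivalent (if one instead preferred a construction with $e$ constant, the fundamental group would serve to separate them). The point of (ii) is to guarantee, for every $n$, the inequality
\[ c_1^2(X_{\widehat U_n})=2\,e(X_{\widehat U_n})+3\,\sigma(X_{\widehat U_n})>3\,e(X_{\widehat U_n}), \]
that is $\sigma(X_{\widehat U_n})>\tfrac13 e(X_{\widehat U_n})$: this is attainable because each separating vanishing cycle of an appropriate topological type raises the signature while raising the Euler characteristic only by $1$, so enough of them makes the signature term dominate.

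Finally I would rule out complex structures with either orientation. For the given orientation, $X_{\widehat U_n}$ carries a genus-$g$ ($g\ge 2$) Lefschetz fibration with at least one singular fiber, hence it is not a rational or ruled surface; but a compact complex surface that is not rational or ruled — equivalently, one of non-negative Kodaira dimension — satisfies the Bogomolov--Miyaoka--Yau inequality $c_1^2\le 3c_2$, contradicting the displayed inequality. For the opposite orientation, $\overline{X_{\widehat U_n}}=\overline{Z_n}\#\CP\#\CP$ is a connected sum of two closed $4$-manifolds each with $b_2^{+}\ge 1$, so all of its Seiberg--Witten invariants vanish; on the other hand $b_2^{+}(\overline{X_{\widehat U_n}})=b_2^{-}(X_{\widehat U_n})=b_2^{-}(Z_n)+2\ge 2$, and a compact complex surface with $b_2^{+}>1$ has a non-trivial Seiberg--Witten invariant (its canonical class), so $\overline{X_{\widehat U_n}}$ admits no complex structure either. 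The main obstacle I anticipate is the construction itself: producing a bona fide positive relator, verifying that the two distinguished boundary twists really correspond to two \emph{disjoint} $(-1)$-sections (a delicate computation inside $\mathrm{Mod}(\Sigma_{g,2})$), and — most of all — engineering the vanishing cycles so that $\sigma>\tfrac13 e$ holds for every $n$, which forces the signature computation via Meyer's cocycle to be carried out with enough precision to control the sign of $c_1^2-3c_2$.
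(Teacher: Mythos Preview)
Your plan diverges sharply from the paper's, and the central step has a genuine gap. The paper never tries to violate Bogomolov--Miyaoka--Yau. Instead it builds $U_n$ by applying one untwisted and one $\phi_n$-twisted $W_{1,[g/2]}$- (or $W_{2,[g/2]}$-) substitution to the Cadavid--Korkmaz relator $W_{2,g}$ in $\Gamma_g^2$, where $\phi_n=t_{a_1}\cdots t_{a_{k-1}}t_{a_k}^n\cdot t_{b_{k+2}}\cdots$ is an explicit product of Dehn twists depending on $n$. The length of the resulting relator is \emph{independent of $n$}; all of the $n$-dependence lives in $\pi_1(X_{\widehat U_n})$, which a direct computation shows to equal $\Z\oplus\Z_n$. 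Both non-complexity with either orientation and pairwise homotopy inequivalence then follow from this single invariant, via the theorem of Ozbagci--Stipsicz that a Lefschetz-fibered $4$-manifold with $\pi_1=\Z\oplus\Z_n$ carries no complex structure with either orientation.

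Your route, by contrast, hinges on producing Lefschetz fibrations with $c_1^2>3c_2$, i.e.\ $\sigma>\tfrac13 e$. This is not a bookkeeping matter: the total space of a genus $\ge 2$ Lefschetz fibration is symplectic, and blowing down your two $(-1)$-sections (and any further exceptional spheres) would yield a \emph{minimal} symplectic $4$-manifold with $c_1^2>3c_2$. Whether any such manifold exists is the symplectic Bogomolov--Miyaoka--Yau problem, which is open. Your heuristic that separating vanishing cycles contribute positively to the signature is a theorem only in the hyperelliptic setting (Endo's formula); Meyer's cocycle in general assigns no fixed sign to individual twists, so ``summing the local signature contributions of the vanishing cycles, separating ones counting positively'' is not a valid procedure. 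Thus the inequality on which your entire non-complexity argument rests is, as far as anyone knows, unattainable --- and you yourself flag the construction as the ``main obstacle''. There is also a smaller error: carrying a genus-$g\ge2$ Lefschetz fibration with singular fibers does \emph{not} exclude a rational or ruled total space (blow-ups of $\CP$ carry such fibrations for every $g$, coming from pencils of degree-$d$ curves); you would need $b_2^+>1$ or some other input to run that step.
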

Here, a non-holomorphic Lefschetz fibration means that it is not isomorphic to any holomorphic one. 
We would like to emphasize that we are able to give explicit monodromy factorizations of the above fibrations although we only give a procedure to get such factorizations without explicitly showing them. 
The rest of this section, we give some background on Theorem~\ref{thm3} and~\ref{thm2}.

\subsection{Lefschetz fibrations with a $(-1)$-section} \ 

The reason that we focus on Lefschetz fibrations that have $(-1)$-sections is that they play an important role as follows. 
Blowing up at the base loci of a genus-$g$ Lefschetz pencil yields a genus-$g$ Lefschetz fibration with $(-1)$-sections, and conversely, blowing down of $(-1)$-sections of a genus-$g$ Lefschetz fibration gives a genus-$g$ Lefschetz pencil. 
Furthermore, a closed $4$-manifold admits a symplectic structure if and only if it admits a Lefschetz pencil (Donaldson \cite{Do} proved the ``if" part, and the ``only if" part was shown in \cite{GS}). 
On the other hand, a Lefschetz fibration with a $(-1)$-section is fiber sum indecomposable (see \cite{St3},\cite{Sm4}); hence, such a fibration can be considered ``prime" with respect to the fiber sum operation. 
Therefore, as a corollary of Theorem~\ref{thm3} and~\ref{thm2}, we obtain the following result. 
\begin{cor}
	For arbitrary $g\geq 3$, there exists a genus-$g$ non-holomorphic Lefschetz pencil on a simply-connected 4-manifold. 
	For arbitrary $g\geq 4$, there exists infinitely many genus-$g$ non-holomorphic Lefschetz pencils on 4-manifolds that cannot admit any complex structure with either orientation. 
\end{cor}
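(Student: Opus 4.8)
The plan is to run the blow-down direction of the pencil--fibration correspondence recalled above and to transport non-holomorphicity and the topological obstructions from the Lefschetz fibrations of Theorems~\ref{thm3} and~\ref{thm2} to the resulting pencils.

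First I would take the genus-$g$ Lefschetz fibration $X\to S^2$ of Theorem~\ref{thm3} and blow down its $(-1)$-section, obtaining a genus-$g$ Lefschetz pencil on a closed $4$-manifold $Y$ with $X\cong Y\#\CPb$. Since $\CPb$ is simply connected, $\pi_1(Y)\cong\pi_1(X)=1$. For non-holomorphicity: if the pencil on $Y$ were isomorphic to a holomorphic Lefschetz pencil on a complex surface, then blowing up its base point would yield a holomorphic Lefschetz fibration, and since blowing up a base point and blowing down the corresponding $(-1)$-section are mutually inverse, that fibration would be isomorphic to $X\to S^2$, contradicting Theorem~\ref{thm3}. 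So for every $g\geq 3$ we obtain a genus-$g$ non-holomorphic Lefschetz pencil on a simply-connected $4$-manifold.

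Next, fix $g\geq 4$ and for each $n$ blow down the two disjoint $(-1)$-sections of $X_{\widehat{U}_n}\to S^2$ from Theorem~\ref{thm2}; disjointness makes simultaneous blow-down legitimate and produces a genus-$g$ Lefschetz pencil with two base points on a $4$-manifold $Y_n$ with $X_{\widehat{U}_n}\cong Y_n\#2\CPb$. If $Y_n\cong Y_m$ for some $n\neq m$, then blowing up at two points---an operation whose result up to diffeomorphism does not depend on the chosen points---would give $X_{\widehat{U}_n}\cong X_{\widehat{U}_m}$, contradicting that these are not homotopy equivalent; hence the $Y_n$ are pairwise non-diffeomorphic and we obtain infinitely many such pencils. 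Non-holomorphicity of the pencil on $Y_n$ follows exactly as above from Theorem~\ref{thm2}, and if $Y_n$ carried a complex structure inducing its given orientation then blowing it up twice would make $X_{\widehat{U}_n}\cong Y_n\#2\CPb$ a complex surface, contradicting Theorem~\ref{thm2}.

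The main obstacle is the remaining case, that $\overline{Y_n}$ (the manifold with reversed orientation) is not a complex surface either: here the blow-up trick is of no help, because $\overline{X_{\widehat{U}_n}}\cong\overline{Y_n}\#2\CP$ is obtained from $\overline{Y_n}$ by connect-summing with copies of $\CP$, which is not a blow-up, so the blow-up/blow-down relation does not connect a complex structure on $\overline{Y_n}$ with one on $\overline{X_{\widehat{U}_n}}$. To handle it I would not argue abstractly but revisit the proof of Theorem~\ref{thm2}, which presumably rules out complex structures on both $X_{\widehat{U}_n}$ and $\overline{X_{\widehat{U}_n}}$ by means of explicit invariants---characteristic numbers, $b_1$, and the small-perturbation Seiberg--Witten basic classes computed there. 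All of these invariants of $Y_n$ and $\overline{Y_n}$ are determined by the corresponding invariants of $X_{\widehat{U}_n}$ through the blow-down and orientation-reversal formulas, so one can re-run the same case analysis based on the Enriques--Kodaira classification for $Y_n$ and $\overline{Y_n}$ and conclude that neither admits a complex structure with either orientation, completing the proof.
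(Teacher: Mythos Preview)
Your overall strategy---blow down the $(-1)$-sections and transport the properties---is exactly the (implicit) argument the paper has in mind, and the first half of your proof is fine. The issue is in how you handle the ``no complex structure with either orientation'' claim for $Y_n$.

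The obstacle you identify for $\overline{Y_n}$ is a red herring, and your proposed fix (revisiting Seiberg--Witten basic classes and characteristic numbers) does not match what actually happens in the paper. The proof of Theorem~\ref{thm2} does not compute any SW invariants: it computes $\pi_1(X_{\widehat{U}_n})=\mathbb{Z}\oplus\mathbb{Z}_n$ and then invokes Theorem~\ref{OSnoncomplex} (the Ozbagci--Stipsicz criterion), which says that a symplectic $4$-manifold with this fundamental group admits no complex structure \emph{with either orientation}. The point is that this criterion depends only on $\pi_1$ together with the existence of a symplectic structure. Since blowing down $(-1)$-spheres preserves $\pi_1$, you get $\pi_1(Y_n)=\mathbb{Z}\oplus\mathbb{Z}_n$ directly; and $Y_n$ is symplectic because it carries a Lefschetz pencil. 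So the Ozbagci--Stipsicz argument (see also \cite{Baykur1}) applies verbatim to $Y_n$ and rules out complex structures on both $Y_n$ and $\overline{Y_n}$ in one stroke---no blow-up trick or case analysis needed.

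As a side remark, your distinctness argument for the $Y_n$ is also more elaborate than necessary: once you know $\pi_1(Y_n)=\mathbb{Z}\oplus\mathbb{Z}_n$, the $Y_n$ are pairwise non-homotopy-equivalent for trivial reasons.
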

\begin{rem}
	In~\cite{Baykur2}, Baykur constructed infinitely many non-holomorphic genus-$3$ Lefschetz pencils 
        with explicit monodromies. 
	The $4$-manifolds obtained as the total spaces are not simply connected and do not admit any complex structure with either orientation. 
\end{rem}
\begin{rem}
	Donaldson's construction of Lefschetz pencils on symplectic $4$-manifolds immediately implies the existence of non-holomorphic Lefschetz pencils since there are symplectic $4$-manifolds that cannot be complex.
	Yet this does not tell much about the genera of the resulting pencils.
	Our result shows the existence of non-holomorphic Lefschetz pencils for \textit{arbitrary genus $g \geq 3$}.
\end{rem}

\subsection{The slope inequality and simply-connected examples} \ 
The ``slope inequality" derives from the geography problem of relatively minimal holomorphic fibrations. 
Let us consider a relatively minimal genus-$g$ holomorphic fibration $\mathcal{F}:S\to C$ where $S$ and $C$ are a complex surface and a complex curve, respectively. 
In \cite{Xi}, Xiao defined a certain numerical invariant $\lambda_\mathcal{F}$, called the ``slope" of $\mathcal{F}$, determined by the signature and Euler characteristic of $S$, the genera of $C$ and a generic fiber. 
Then he showed that every relatively minimal genus-$g$ holomorphic fibration $f$ satisfies that $4-4/g\leq \lambda_\mathcal{F}$. 
We call this inequality the slope inequality. 

The notion of the slope can be extended for (smooth) Lefschetz fibrations as $\lambda_\mathcal{F}$ is determined by topological invariants (see Section~\ref{sec:nonholomorphicity}).
Besides, it turns out that the slope inequality holds for any hyperelliptic Lefschetz fibration, especially any genus-$2$ Lefschetz fibration. %
Hain conjectured that every Lefschetz fibration over $S^2$ satisfies the slope inequality as well (see \cite{ABKP},\cite{EN}). 
This conjecture in fact fails;
the third author gave examples violating the slope inequality (Theorem 3.1 of \cite{Mo}).
In particular, those examples are non-holomorphic by Xiao's result. 
However, we do not know if they are fiber sum indecomposable. 
Hence, we ask the following question: \textit{Is there a fiber sum indecomposable Lefschetz fibration violating the slope inequality?} 
Theorem~\ref{thm3} together with the above-mentioned work of \cite{St3} and \cite{Sm4} implies that the answer to this question is positive for any $g \geq 3$. 

Let us consider a genus-$g$ non-holomorphic Lefschetz fibration $X \to S^2$ with a $(-1)$-section such that $\pi_1(X)=1$. 
To the best of our knowledge, all known such fibrations with explicit monodromy factorizations are Fuller's example $(g=3)$\footnote{It was shown by Smith \cite{Sm3} that Fuller's example is non-holomorphic.} 
and Endo-Nagami's examples \cite{EN} $(g=3,4,5)$. 
Theorem~\ref{thm3} gives such examples with explicit monodromy factorizations for arbitrary $g\geq 3$. 
\begin{rem}
	We do not know whether the examples in \cite{Sm3}, \cite{EN} and Theorem~\ref{thm3} have non-complex total spaces or not. 
    On the other hand, Li \cite{Li} constructed non-holomorphic Lefschetz pencils (fibrations with a $(-1)$-section) on complex surfaces. 
    However, their genus are implicit. 
\end{rem}

\subsection{Lefschetz fibrations with non-complex total space} \ 

Many Lefschetz fibrations with explicit monodromies and non-complex total spaces have been constructed using the (twisted) fiber sum operation (see for instance \cite{Sm},\cite{OS},\cite{FS1},\cite{Kor},\cite{AO}\footnote{Baykur has informed us that the examples in \cite{AO} should be fiber sum decomposable from Ozbagci's talk in Turkey few years ago.},\cite{AM},\cite{BK}).
They are non-holomorphic, however, do not have any $(-1)$-section since they are decomposable. 
On the other hand, Stipsicz \cite{St3} and, independently, Smith \cite{Sm4} proved that there are infinitely many fiber sum indecomposable Lefschetz fibrations with non-complex total spaces. 
Since the construction of these fibrations is based on Donaldson's Theorem \cite{Do}, their monodromy factorizations are not explicitly given. 
Theorem~\ref{thm2} gives infinitely many fiber sum indecomposable Lefschetz fibrations with a explicit monodromy factorizations and non-complex total spaces for any $g \geq 4$.

The fundamental group of the total space $X_{\widehat{U}_n}$ of a genus-$g$ Lefschetz fibration in the family in Theorem~\ref{thm2} is $\pi_1(X_{\widehat{U}_n})=\mathbb{Z}\oplus \mathbb{Z}_n$. 
From the work of \cite{OS} (see also \cite{Baykur1}), the $4$-manifold $X_{\widehat{U}_n}$ does not carry any complex structure with either orientation. 
For $g\geq 22$, non-holomorphic Lefschetz fibrations with the same property of Theorem~\ref{thm2} were constructed in \cite{KM} based on the technique of this paper. 
Theorem~\ref{thm2} improves this result.

\begin{rem}
Non-holomorphic genus-$2$ Lefschetz fibrations with finite cyclic fundamental groups and without any $(-1)$-sections were constructed in \cite{AM} by rationally blowing down a twisted fiber sum of two copies of Matsumoto's fibration. 
However, we do not know whether these are decomposable or not. 
\end{rem}

\noindent \textit{Acknowledgments.} 
The third author was supported by Grant-in-Aid for Young Scientists (B) (No. 16K17601), Japan Society for the Promotion of Science. 
The authors would like to thank R. Inanc Baykur for comments on this paper and pointing out that there are more various examples of non-holomorphic Lefschetz fibrations than we mentioned in the first version of the manuscript. 
We are also grateful to Anar Akhmedov for comments.

\section{Preliminaries}
\subsection{Notations}\label{notation}

\

For convenience sake, we first fix the notation and the symbols for the curves which we use throughout the paper.
Let $\Sigma_g$ be the closed oriented surface of genus $g$ standardly embedded in the $3$-space and $a_1,b_1,a_2,b_2,\ldots,a_g,b_g$ be the standard generators of the fundamental group $\pi_1(\Sigma_g)$ of $\Sigma_g$ as shown in Figure~\ref{generator}. 
For loops $a$ and $b$ in $\pi_1(\Sigma_g)$, the product $ab$ means that we traverses first $a$ then $b$ as usual.
\begin{figure}[hbt]
 \centering 
     \includegraphics[width=6cm]{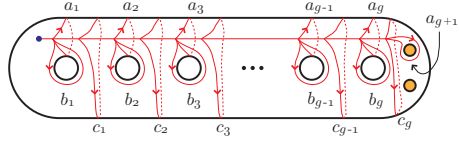}
     \caption{The standardly embedded $\Sigma_g$ with two indicated disks on the rightmost position and the generators $a_j,b_j$ of the fundamental group and loops $c_j$.}
     \label{generator}
\end{figure}

Let $c_1,c_2,\ldots,c_g$ and $a_{g+1}$ be the simple closed curves on $\Sigma_g$ as shown in Figure~\ref{generator}. 
Note that in $\pi_1(\Sigma_g)$, up to conjugation, we have 
\begin{align}\label{c}
c_i=b_i^{-1}\cdots b_2^{-1}b_1^{-1}(a_1b_1a_1^{-1})(a_2b_2a_2^{-1})\cdots (a_ib_ia_i^{-1}) 
\end{align}
for each $1\leq i\leq g$, in addition, $c_g=1$ and $a_{g+1}=1$. Then, the fundamental group $\pi_1(\Sigma_g)$ has the following presentation: 
\begin{align*}
\pi_1(\Sigma_g)=\langle a_1,b_1,a_2,b_2,\ldots, a_g,b_g \mid c_g\rangle.  
\end{align*}
\begin{figure}[hbt]
 \centering
      \includegraphics[width=9cm]{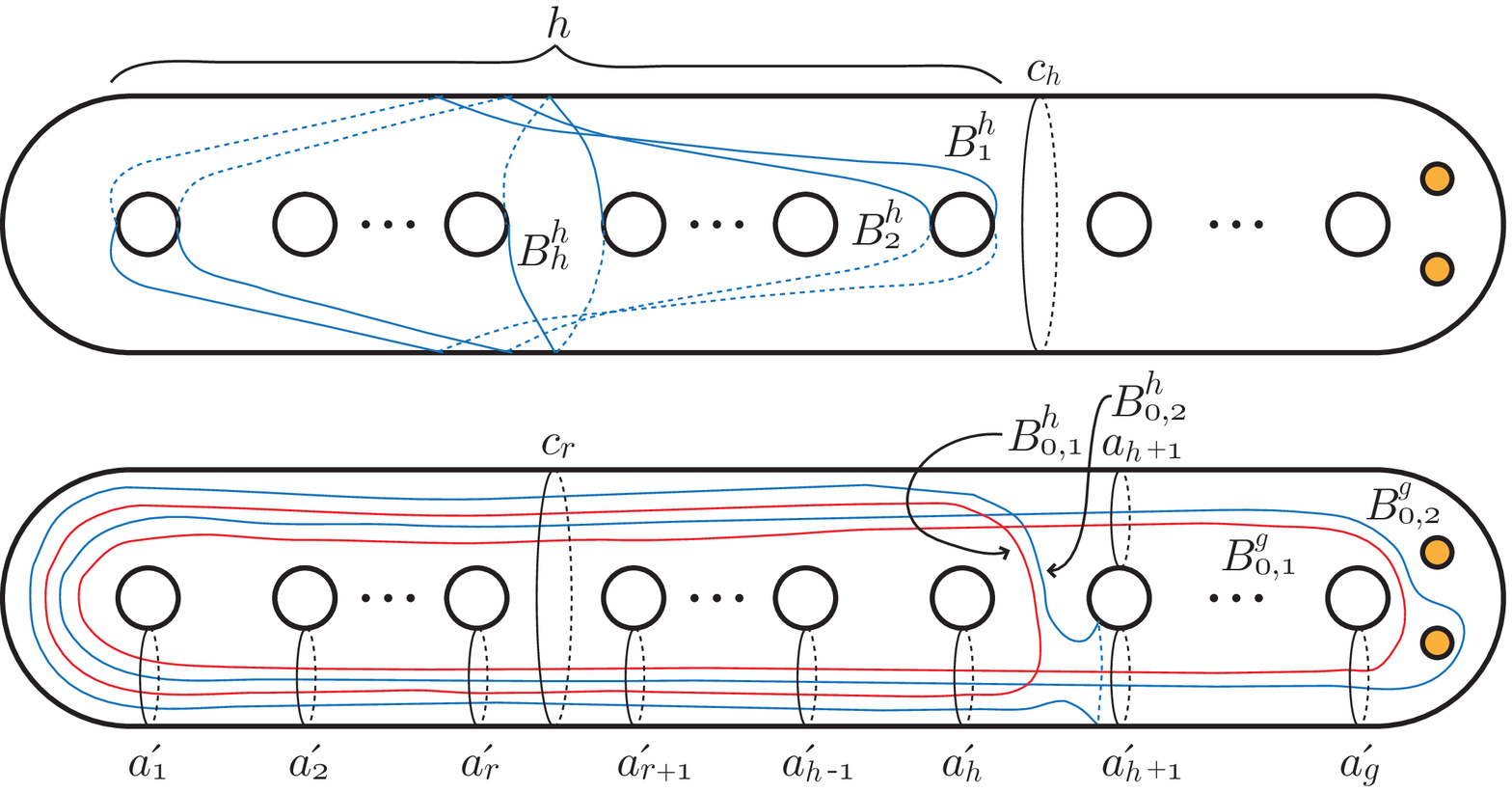}
      \caption{The curves $B_{0,1}^h, B_{0,2}^h, B_1^h, B_2^h, \ldots, B_h^h, a_1^\prime, a_2^\prime, \ldots, a_g^\prime$ for $h=2r$.}
      \label{Korkmaz-even}
 \end{figure}
\begin{figure}[hbt]
 \centering
      \includegraphics[width=9cm]{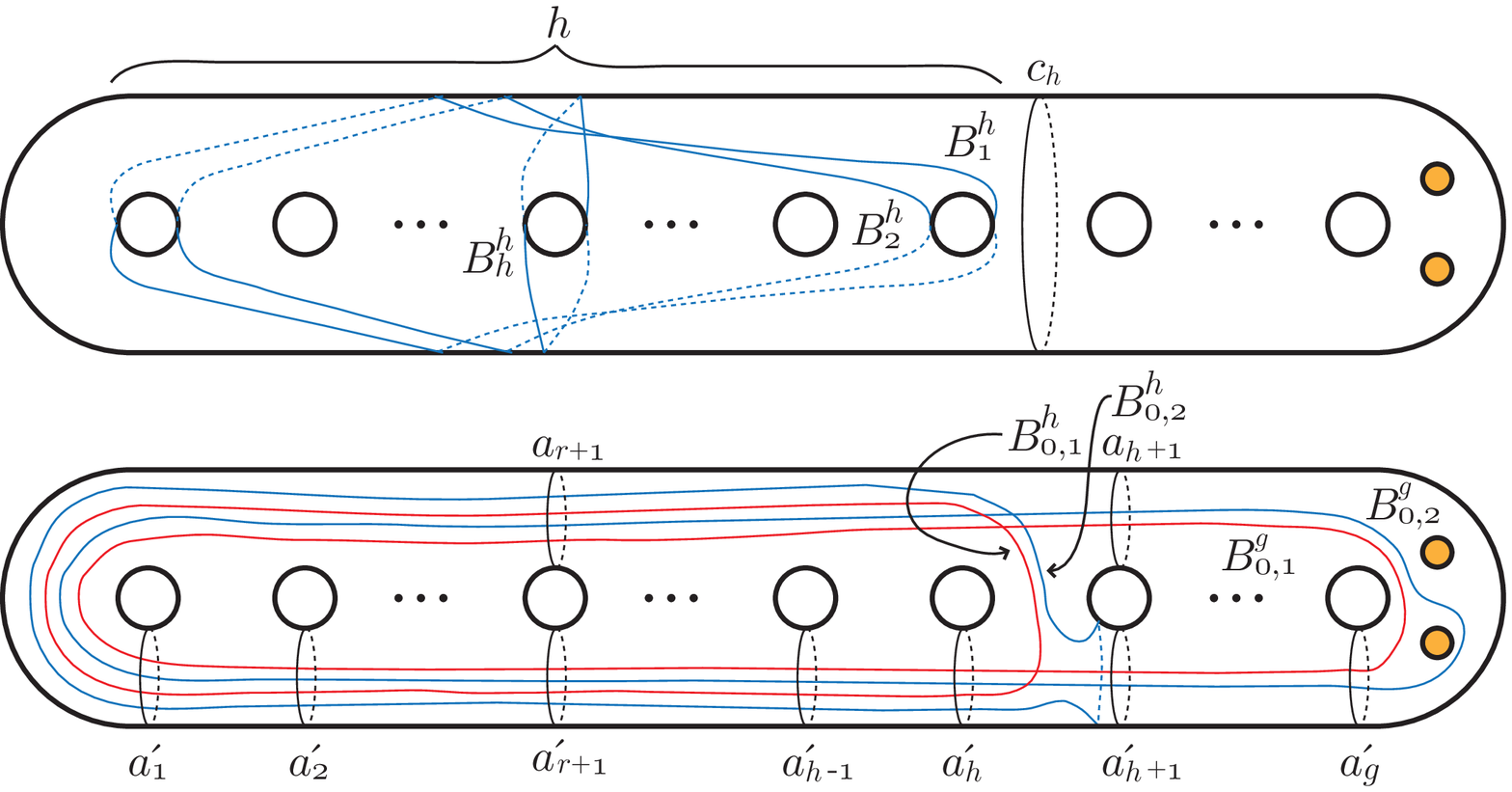}
      \caption{The curves $B_{0,1}^h, B_{0,2}^h, B_1^h, B_2^h, \ldots, B_h^h, a_1^\prime, a_2^\prime, \ldots,a_g^\prime $ for $h=2r+1$.}
      \label{Korkmaz-odd}
 \end{figure}

Let $B_{0,1}^h,B_{0,2}^h,B_1^h,B_2^h,\ldots, B_h^h$ $(h=1,2,\ldots, g)$ and $a_1^\prime,a_2^\prime,\ldots,a_g^\prime$ be the simple closed curves on $\Sigma_g$ as shown in Figure~\ref{Korkmaz-even} and~\ref{Korkmaz-odd}. 
Note that in $\pi_1(\Sigma_g)$, up to conjugation, we also have $a_h^\prime=c_ha_{h+1}$ for $1\leq h \leq g$. 

Suppose that $h=2r$. 
Then, it is easy to check that the following equalities hold in $\pi_1(\Sigma_g)$ up to conjugation: 
\begin{align}
&B_{0,1}^h=b_1b_2\cdots b_h, \ \ \ B_{0,2}^h=b_1b_2\cdots b_h \cdot c_ha_{h+1}, \ \ 1\leq h\leq g; \label{0sh}\\
&B_{2k-1}^h=a_k \cdot b_kb_{k+1} \cdots b_{h+1-k} \cdot c_{h+1-k}a_{h+1-k}, \ \ 1\leq k\leq r, \ 1\leq h \leq g; \label{2k-1}\\
&B_{2k}^h=a_k \cdot b_{k+1}b_{k+2} \cdots b_{h-k} \cdot c_{h-k}a_{h+1-k}, \ \ 1\leq k\leq r, \ 1\leq h \leq g. \label{2k}
\end{align}
In the case of $h=2r+1$, the same equalities (\ref{0sh}) and (\ref{2k}) hold without change, 
while the equality (\ref{2k-1}) hold for $1\leq k\leq r+1, 1\leq h \leq g$.

From now on, we use the same letter for a loop and its homotopy class by abuse of notation. 
Similarly, we use the same letter for a diffeomorphism and its isotopy class, or for a simple closed curve and its isotopy class. 
A simple loop and a simple closed curve are even denoted by the same letter. 
It will cause no confusion as it will be clear from the context which one we mean.

\subsection{Substitution technique}\label{MCG}

\

In this subsection, we introduce key techniques, called a \textit{substitution} and a \textit{partial conjugation}, for constructing a new word in mapping class groups from a word and a relator. 
We will utilize this technique to construct Lefschetz fibrations with $(-1)$-section in the later sections.

Let $\Sigma_g^b$ be a compact oriented surface of genus $g$ with $b$ boundary components. 
The \textit{mapping class group} $\Gamma_g^b$ of $\Sigma_g^b$ is the group of isotopy classes of orientation preserving self-diffeomorphisms of $\Sigma_g^b$, where all the maps involved are assumed to fix $\partial \Sigma_g^b$ pointwise. 
For simplicity, we write $\Sigma_g = \Sigma_g^0$ and $\Gamma_g^0=\Gamma_g$. 
For two elements $\phi_1$ and $\phi_2$ in $\Gamma_g^b$, the product $\phi_1\phi_2$ means that we first apply $\phi_2$ then $\phi_1$. 
We denote by $t_c$ be the right-handed \textit{Dehn twist} along a simple closed curve $c$ on $\Sigma_g^b$.

\begin{defn}\label{positive}\rm
Let $v_1,v_2,\ldots,v_n$ be simple closed curves on $\Sigma_g^b$. 
If $t_{v_n}^{\epsilon_n}\cdots t_{v_2}^{\epsilon_2}t_{v_1}^{\epsilon_1}=1$ in $\Gamma_g^b$, where $\epsilon_i=\pm 1$, then this factorization is called a \textit{relator}. 
In the special case where $\epsilon_i= 1$ for all $i$, namely, $t_{v_n}\cdots t_{v_2}t_{v_1}=1$ holds in $\Gamma_g$, then this factorization is called a \textit{positive relator}. 
\end{defn}

We introduce a key technique for constructing new product of right-handed Dehn twists in $\Gamma_g^b$ from old ones. 
\begin{defn}\label{substitution}\rm
Let $v_1,v_2,\ldots,v_k$ and $d_1,d_2,\ldots,d_l$ be simple closed curves on $\Sigma_g^b$ such that the following product, denoted by $R$, is a relator in $\Gamma_g^b$:
\begin{eqnarray*}
R:= \ t_{v_1}t_{v_2}\cdots t_{v_k}t_{d_l}^{-1}\cdots t_{d_2}^{-1}t_{d_1}^{-1}, 
\end{eqnarray*}
which equals the identity as a mapping class by definition.
If a mapping class $\phi$ in $\Gamma_g^b$ satisfies $\phi(d_i)=d_i$, then by the relation $t_{\phi(c)}=\phi t_c\phi^{-1}$, we obtain the following relator, denoted by $R^\phi$, in $\Gamma_g^b$: 
\begin{eqnarray*}
R^\phi = \ t_{\phi(v_1)}t_{\phi(v_2)}\cdots t_{\phi(v_k)}t_{d_l}^{-1}\cdots t_{d_2}^{-1}t_{d_1}^{-1}. 
\end{eqnarray*}
Let $W$ be a product of right-handed Dehn twists including $t_{d_1}t_{d_2}\cdots t_{d_l}$ as a subword: 
\begin{eqnarray*}
W=U\cdot t_{d_1}t_{d_2}\cdots t_{d_l} \cdot V, 
\end{eqnarray*}
where $U$ and $V$ are products of right-handed Dehn twists. 
Then, we get a new product of right-handed Dehn twists, denote by $W^\prime$, as follows: 
\begin{eqnarray*}
U \cdot R^\phi \cdot t_{d_1}t_{d_2}\cdots t_{d_l}\cdot V = U\cdot t_{\phi(v_1)}t_{\phi(v_2)}\cdots t_{\phi(v_k)} \cdot V =:W^\prime, 
\end{eqnarray*}
where the first equality means the equality as a mapping class. 
Then, $W^\prime$ is said to be obtained by applying a \textit{$R^\phi$-substitution} to $W$. 
\end{defn}
\begin{rem}
Fuller introduced the above operation for $\phi=1$. 
In the notation of Definition~\ref{substitution}, set $W_1=U\cdot t_{v_1}t_{v_2}\cdots t_{v_k} \cdot V$ and $W_2=U\cdot t_{\phi(v_1)}t_{\phi(v_2)}\cdots t_{\phi(v_k)} \cdot V$. 
Auroux \cite{Auroux3}, \cite{Auroux4} introduced the operation to obtain $W_2$ from $W_1$ is called a ``partial conjugation" by $\phi$. 
\end{rem}

\subsection{Relators in mapping class groups}\label{relator}

\

In this subsection, we introduce some well-known relators in mapping class groups, called 
the braid relator $B$, the lantern relator $L$, the chain relators $C_k, \overline{C}_k$ and certain relators $W_1^h, W_2^h$.

\begin{defn}[Braid relator]\label{braid}\rm
Let $\alpha$ and $\beta$ be simple closed curves on $\Sigma^b_g$. 
If the geometric intersection number of $\alpha$ and $\beta$ is equal to $0$ (resp. 1), then we have the \textit{braid relator} $B$:
\begin{align*}
B:=t_{\alpha}t_{\beta}t_{\alpha}^{-1}t_{\beta}^{-1} \ \ (\mathrm{resp.} \ \  B:=t_{\alpha}t_{\beta}t_{\alpha}t_{\beta}^{-1}t_{\alpha}^{-1}t_{\beta}^{-1}).
\end{align*}
\end{defn}

\begin{defn}[Lantern relator]\label{lantern}\rm
Let $\delta_{1}$, $\delta_{2}$, $\delta_{3}$ and $\delta_4$ be the four boundary curves of $\Sigma_0^4$ and let $\alpha$, $\beta$ and $\gamma$ be the interior curves as shown in Figure~\ref{LR}. 
Then, we have the \textit{lantern relator} $L$ in $\Gamma_0^4$: 
\begin{align*}
L:=t_{\alpha}t_{\beta}t_{\gamma}t_{\delta_4}^{-1}t_{\delta_3}^{-1}t_{\delta_2}^{-1}t_{\delta_1}^{-1}. 
\end{align*}
\end{defn}
\begin{figure}[hbt]
 \centering
      \includegraphics[width=2.5cm]{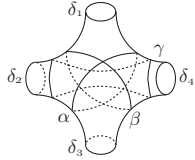}
      \caption{The curves $\delta_1, \delta_2, \delta_3,\delta_4$ and $\alpha, \beta, \gamma$.}
      \label{LR}
 \end{figure}
The lantern relator was discovered by Dehn (see \cite{De}) and was rediscovered by Johnson (see \cite{Jo}).

\begin{defn}[Chain relator]\label{chainr}\rm
Suppose $h \geq 1$. 
Let $\alpha_1, \alpha_2,\ldots,\alpha_{2h+1}$ be simple closed curves on an oriented surface such that $\alpha_i$ and $\alpha_{i+1}$ intersect transversally at exactly one point for $1 \leq i \leq 2h$ and that $\alpha_i$ and $\alpha_j$ are disjoint if $|i-j|\geq 2$. 
Then, a regular neighborhood of $\alpha_1\cup \alpha_2 \cup\cdots \cup \alpha_{2h}$ (resp. $\alpha_1\cup \alpha_2 \cup\cdots \cup \alpha_{2h+1}$) is a subsurface of genus $h$ with one boundary component (resp. two boundary components), say $d$ (resp. $d_1$ and $d_2$). 
We then have the \textit{even chain relator} $C_{2h}$ in $\Gamma_h^1$ and the \textit{odd chain relator} $C_{2h+1}$ in $\Gamma_h^2$:
\begin{align*}
&C_{2h}:=(t_{\alpha_1}t_{\alpha_2}\cdots t_{\alpha_{2h}})^{4h+2}t_d^{-1}, \\
&C_{2h+1}:=(t_{\alpha_1}t_{\alpha_2}\cdots t_{\alpha_{2h+1}})^{2h+2}t_{d_2}^{-1}t_{d_1}^{-1}.
\end{align*}
\end{defn}

\begin{defn}\label{MCK} \rm
Suppose $g \geq 2$. 
Let $\Sigma_g^2$ be the surface of genus $g$ with two boundary components obtained from $\Sigma_g$ by removing two disjoint open disks (cf. Figure~\ref{generator},~\ref{Korkmaz-even} and~\ref{Korkmaz-odd}). 
Let $a_{g+1}$ be one of the boundary curves of $\Sigma_g^2$ as shown in Figure~\ref{generator}, and let $a_{g+1}^\prime$ be the other boundary curve of $\Sigma_g^2$ defined by $a_{g+1}^\prime=c_ga_{g+1}$. 
We then have the following two relators $W_{1,h}$, $W_{2,h}$ in $\Gamma_g^2$ for each $h= 1,2,\ldots,g$: 
\begin{align*}
&W_{1,h}:=
  \left\{ \begin{array}{ll}
      \displaystyle ( t_{B_{0,1}^h} t_{B_1^h} t_{B_2^h} \cdots t_{B_{h-1}^h} t_{B_h^h} t_{c_r})^2 t_{c_h}^{-1} & \ \ (h=2r) \\[3mm]
      \displaystyle ( t_{B_{0,1}^h} t_{B_1^h} t_{B_2^h} \cdots t_{B_{h-1}^h} t_{B_h^h} t_{a_{r+1}}^2 t_{a_{r+1}^\prime}^2 )^2 t_{c_h}^{-1} & \ \ (h=2r+1),
      \end{array} \right.\\[3pt]
&W_{2,h}:=
  \left\{ \begin{array}{ll}
      \displaystyle ( t_{B_{0,2}^h} t_{B_1^h} t_{B_2^h} \cdots t_{B_{h-1}^h} t_{B_h^h} t_{c_r} )^2 t_{a_{h+1}}^{-1}t_{a^\prime_{h+1}}^{-1} & \ \ (h=2r) \\[3mm]
      \displaystyle ( t_{B_{0,2}^h} t_{B_1^h} t_{B_2^h} \cdots t_{B_{h-1}^h} t_{B_h^h} t_{a_{r+1}}^2 t_{a_{r+1}^\prime}^2 )^2 t_{a_{h+1}}^{-1}t_{a_{h+1}^\prime}^{-1} & \ \ (h=2r+1).
      \end{array} \right.
\end{align*}
\end{defn}

Note that in $\Gamma_g$, the relator $W_{2,g}$ is a positive relator. 
Matsumoto \cite{Ma} discovered this positive relator for $g=2$ , and Cadavid \cite{Ca} and independently Korkmaz \cite{Kor} generalized Matsumoto's relator to $g\geq 3$. 
$W_{1,g}$ was  shown to be a relator in $\Gamma_g^1$ by Ozbagci and Stipsicz \cite{OS2}. 
In \cite{Kor2}, Korkmaz claims that $W_{2,g}$ is a relator in $\Gamma_g^2$ without proof. 
Yet, we can show it by applying the same argument in Section 2 of \cite{Kor} (for example see Section 6 of \cite{KM}).

\section{Lefschetz fibrations}
\subsection{Basics on Lefschetz fibrations}\label{Lefschetz}

\

We recall the definition and basic properties of Lefschetz fibrations. 
More details can be found in \cite{GS}.

\begin{defn}\rm
Let $X$ be a closed, oriented smooth $4$-manifold. 
A smooth map $f : X \rightarrow S^2$ is a \textit{Lefschetz fibration} if for each critical point $p$ of $f$ and $f(p)$, 
there are complex local coordinate charts agreeing with the orientations of $X$ and $S^2$ on which $f$ is of the form $f(z_{1},z_{2})=z_{1}z_{2}$. 
\end{defn}

It follows that $f$ has finitely many critical points $C=\{p_1,p_2,\ldots,p_n\}$. 
We can assume that $f$ is injective on $C$ and relatively minimal (i.e. no fiber contains a sphere with self-intersection number $-1$). 
Each fiber which contains a critical point, called \textit{singular fiber}, is obtained by ``collapsing'' a simple closed curve in the prescribed regular fiber to a point. 
We call the simple closed curve in the regular fiber the \textit{vanishing cycle}. 
If the genus of the regular fiber of $f$ is equal to $g$, then we call $f$ the \textit{genus-$g$ Lefschetz fibration}.

The monodromy of the fibration around a singular fiber $f^{-1}(f(p_i))$ is given by a right-handed Dehn twist along the corresponding vanishing cycle, denoted by $v_i$. 
Once we fix an identification of $\Sigma_{g}$ with the fiber over a base point of $S^2$, we can characterize the Lefschetz fibration  $f:X\rightarrow S^2$ by its \textit{monodromy representation} $\pi_{1}(S^2-f(C))\rightarrow \Gamma_{g}$. 
Here, this map is indeed an anti-homomorphism. 
Let $\gamma_1,\gamma_2,\ldots,\gamma_n$ be an ordered system of generating loops for $\pi_{1}(S^2-f(C))$ 
such that each $\gamma_i$ encircles only $f(p_i)$ and $\gamma_1\gamma_2\cdots\gamma_n=1$ in $\pi_{1}(S^2-f(C))$. 
Thus, the monodromy of $f$ 
comprises a positive relator 
\begin{eqnarray*}
t_{v_n}\cdots t_{v_2}t_{v_1}=1 \ \ {\rm in} \ \Gamma_g. 
\end{eqnarray*}
Conversely, for any positive relator $P$ in $\Gamma_g$, one can construct a genus-$g$ Lefschetz fibration over $S^2$ whose monodromy is $P$. 
Therefore, we denote a genus-$g$ Lefschetz fibration associated with a positive relator $P$ in $\Gamma_g$ by $f_P:X_P \rightarrow S^2$.

Two Lefschetz fibrations $f_{P_i}:X_{P_i}\to S^2$ $(i=1, 2)$ are said to be \textit{isomorphic} if there exist orientation preserving diffeomorphisms $H:X_{P_1}\to X_{P_2}$ and $h:S^2\to S^2$ such that $f_{P_2}\circ H = h\circ f_{P_1}$. 
According to theorems of Kas \cite{Kas} and Matsumoto \cite{Ma}, if $g\geq 2$, then the isomorphism class of a Lefschetz fibration is determined by a positive relator modulo \textit{simultaneous conjugations} 
\begin{align*}
t_{v_n}\cdots t_{v_2}t_{v_1} \sim t_{\phi(v_n)}\cdots t_{\phi(v_2)}t_{\phi(v_1)} \ \ {\rm for \ any} \ \phi \in \Gamma_g
\end{align*}
and \textit{elementary transformations} 
\begin{align*}
&t_{v_n}\cdots t_{v_{i+2}}t_{v_{i+1}}t_{v_i}t_{v_{i-1}}t_{v_{i-2}}\cdots t_{v_1}& &\sim& &t_{v_n}\cdots t_{v_{i+2}}t_{v_i}t_{t_{v_i}^{-1}(v_{i+1})}t_{v_{i-1}}t_{v_{i-2}}\cdots t_{v_1},&\\
&t_{v_n}\cdots t_{v_{i+2}}t_{v_{i+1}}t_{v_i}t_{v_{i-1}}t_{v_{i-2}}\cdots t_{v_1}& &\sim& &t_{v_n}\cdots t_{v_{i+2}}t_{v_{i+1}}t_{t_{v_i}(v_{i-1})}t_{v_i}t_{v_{i-2}}\cdots t_{v_1}.&
\end{align*}
Therefore, if $P_2$ is obtained by applying a series of elementary transformations and simultaneous conjugations to $P_1$, then 
\begin{align}\label{isomorphic}
&\sigma(X_{P_1})=\sigma(X_{P_2})& &\mathrm{and}& &e(X_{P_1})=e(X_{P_2}),& 
\end{align}
where $\sigma(X)$ and $e(X)$ stand for the signature and the Euler characteristic of a $4$-manifold $X$, respectively.

\subsection{Sections of Lefschetz fibrations}
\begin{defn}\rm
Let $f:X\to S^2$ be a Lefschetz fibration. 
A map $\sigma:S^2\rightarrow X$ is called a $k$-\textit{section} of $f$ if it satisfies $f\circ \sigma={\rm id}_{S^2}$ and the self-intersection number $[\sigma(S^2)]^2=k$, where $[\sigma(S^2)]$ is the homology class in $H_2(X;\Z)$. 
\end{defn}

If the factorization $P=t_{v_n}\cdots t_{v_2}t_{v_1}(=1)$ lifts from $\Gamma_g$ to $\Gamma_g^1$ as 
\begin{align*}
  t_\delta ^k=t_{\widetilde{v}_n}\cdots t_{\widetilde{v}_2}t_{\widetilde{v}_1} \ \ \ (i.e. \ 1=t_{\widetilde{v}_n}\cdots t_{\widetilde{v}_2}t_{\widetilde{v}_1}t_\delta ^{-k}), 
\end{align*}
then the Lefschetz fibration $f_P$ has a $(-k)$-section. 
Here, $\delta$ is the boundary curve of $\Sigma_g^1$ and $t_{\widetilde{v}_i}$ is a Dehn twist mapped to $t_{v_i}$ under $\Gamma _g^1 \to \Gamma _g$. 
Conversely, if a genus-$g$ Lefschetz fibration admits a $(-k)$-section, we obtain a relator of the above type in $\Gamma_g^1$.  
A similar relator holds for $b$ disjoint sections (in which case one has to work in the mapping class group $\Gamma_g^b$).

A necessary condition for a Lefschetz fibration to admit a $(-1)$-section was shown independently by Stipsicz \cite{St3} and Smith \cite{Sm4}:
\begin{thm}[\cite{St3},\cite{Sm4}]\label{Indecomposable}
Let $g \geq 1$. 
If a genus-$g$ Lefschetz fibration $f:X\to S^2$ admits a $(-1)$-section, 
then $f$ is fiber sum indecomposable. 
\end{thm}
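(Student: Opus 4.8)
The plan is to argue by contradiction using the behavior of the $(-1)$-section under a hypothetical fiber sum decomposition. Suppose $f : X \to S^2$ is a genus-$g$ Lefschetz fibration admitting a $(-1)$-section $\sigma$, and suppose $f$ decomposes as a fiber sum $f = f_1 \#_\psi f_2$ of two nontrivial genus-$g$ Lefschetz fibrations $f_i : X_i \to S^2$ along a gluing diffeomorphism $\psi$ of the fiber. The first step is to recall what the fiber sum does at the level of homology: writing $F$ for the fiber class, the manifold $X$ is built from $X_1 \setminus \nu F_1$ and $X_2 \setminus \nu F_2$, and a regular fiber $F$ of $f$ is the image of a regular fiber of each $f_i$. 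Because each $f_i$ is a nontrivial Lefschetz fibration over $S^2$ with at least one vanishing cycle, the fiber class $F_i$ in each piece is a primitive class of square $0$ that bounds in neither piece but whose complement carries the essential topology.

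The key step is to track the $(-1)$-section through this decomposition. The section $\sigma(S^2)$ is a smoothly embedded sphere $S$ in $X$ with $[S]^2 = -1$, meeting the fiber $F$ transversally in a single point, so $[S]\cdot[F] = 1$. In the fiber sum picture, $S$ must intersect the separating hypersurface (the glued-up copy of $\Sigma_g \times S^1$, or rather the neighborhood boundary) and hence decomposes as $S = S_1 \cup_\gamma S_2$ where $S_i \subset X_i \setminus \nu F_i$ is a surface with boundary; since $[S]\cdot[F]=1$ and each piece sees the fiber, one of the pieces, say $S_1$, is a disk (a section over a disk) while $S_2$ accounts for the rest, or after isotopy $S$ meets each $X_i \setminus \nu F_i$ in a once-punctured sphere. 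Capping off gives, in $X_1$, a sphere $\widehat{S}_1$ which is a section of $f_1$, and one computes $[\widehat{S}_1]^2$ in terms of $[S]^2$ and the framing/normal data lost in the surgery: the fiber sum operation changes the self-intersection of a section that crosses the neck by $0$ in one standard normalization, so that $\widehat{S}_1$ would be a $(-1)$-section (or the count of how the $-1$ is distributed forces one of the $X_i$ to contain a sphere of square $-1$ sitting in a fiber, contradicting relative minimality of the $f_i$). This is the heart of the matter, and I expect the bookkeeping of self-intersection numbers across the fiber sum — deciding precisely how the $-1$ is apportioned and why it cannot be realized compatibly with Lefschetz fibration structure on both summands — to be the main obstacle; one wants to conclude that a $(-1)$-sphere is forced into a fiber of one summand, violating the relative minimality built into the definition of a Lefschetz fibration in Section~\ref{Lefschetz}.

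An alternative and possibly cleaner route, which is the one I would actually write up, is to use the associated Lefschetz pencil. By the discussion in the introduction, blowing down the $(-1)$-section $\sigma$ converts $f : X \to S^2$ into a genus-$g$ Lefschetz pencil on the blown-down manifold $X'$, with base locus a single point. A fiber sum decomposition of $f$ is incompatible with the existence of this pencil structure: a pencil has a nonempty base locus and its fibers all pass through that point, whereas performing a fiber sum removes neighborhoods of two regular fibers and reglues — an operation that destroys any common intersection point of the fibers. More precisely, if $f = f_1 \#_\psi f_2$, then the generic fiber $F$ separates $X$ into two pieces each containing ``half'' the monodromy; after blowing down, the pencil fibers through the base point $p$ would have to lie entirely on one side of the separating hypersurface, but they are homologous to $F$ and hence cross it, a contradiction. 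One then only needs to check the edge cases: that neither summand $f_i$ is trivial (guaranteed by the definition of fiber sum decomposition requiring both summands to have nonempty sets of vanishing cycles), and that the genus-$g$ condition with $g \geq 1$ rules out degenerate fiber genus.

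To assemble the proof I would: (1) recall the fiber sum construction and fix notation for the separating hypersurface $N \cong \Sigma_g \times S^1$; (2) observe $[S] \cdot [F] = 1$ for the $(-1)$-section sphere $S$ and the fiber class $F$; (3) decompose $S$ along $N$, cap off to get sections $\widehat S_i$ of $f_i$, and compute their self-intersections, showing the $-1$ must concentrate in one summand as a fiber-component $(-1)$-sphere; (4) invoke relative minimality of Lefschetz fibrations to reach a contradiction. The one genuinely delicate point, as noted, is step (3): getting the sign conventions and the additivity of self-intersection under the fiber sum exactly right, and ruling out the possibility that $S$ meets $N$ in more than the expected number of circles (which one handles by an innermost-curve/irreducibility argument on $S \cong S^2$, since any circle of $S \cap N$ bounding a disk in $S$ on the $N$-side can be removed by isotopy, and the remaining essential circles are all parallel to the $S^1$ factor because $S$ meets $F$ once).
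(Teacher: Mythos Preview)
The paper does not prove this theorem at all; it is quoted from Stipsicz~\cite{St3} and Smith~\cite{Sm4} and used as a black box. So there is no ``paper's own proof'' to compare against, only the original sources.

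Your approach~1 has the right overall architecture --- restrict the section $S$ across the neck $N\cong\Sigma_g\times S^1$, cap off to sections $\widehat S_i$ of $f_i$, and use additivity $[\widehat S_1]^2+[\widehat S_2]^2=[S]^2=-1$ --- but the step where you reach a contradiction is wrong. You write that the distribution of the $-1$ ``forces one of the $X_i$ to contain a sphere of square $-1$ sitting in a fiber, contradicting relative minimality.'' This is not what happens: the $\widehat S_i$ are \emph{sections}, hence transverse to every fiber, and relative minimality only forbids $(-1)$-spheres contained in fibers. What you actually need is the statement that a nontrivial genus-$g$ Lefschetz fibration over $S^2$ admits no section of self-intersection $\geq 0$ (equivalently, a nonempty positive word in $\Gamma_g$ lifts to $t_\delta^k$ in $\Gamma_g^1$ only with $k\geq 1$). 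That statement is not elementary and is essentially the content of the theorems you are trying to prove; Stipsicz obtains it via the canonical class of the symplectic fiber sum, and Smith via holomorphic methods. Without it, the equation $k_1+k_2=-1$ is perfectly consistent with, say, $k_1=0$ and $k_2=-1$, and you have no contradiction.

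Your approach~2 does not work either. Blowing down the $(-1)$-section produces a pencil on $X'$, but since $S$ crosses the neck $N$, the blow-down alters the topology exactly along $N$ and the fiber-sum decomposition of $X$ does not descend to any splitting of $X'$; there are no ``two sides'' in $X'$ for the pencil fibers to lie on. The sentence ``the pencil fibers through the base point $p$ would have to lie entirely on one side of the separating hypersurface'' has no meaning once you pass to $X'$.
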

Here, we recall the definition of fiber sum. 
Let $f_i: X_i \to S^2$ be a genus-$g$ Lefschetz fibration for $i=1,2$, and let $D_i$ be an open disk on $S^2$ which does not contain any critical values. 
Then, the \textit{fiber sum} $f_1 \#_F f_2 : X_1\#_FX_2 \rightarrow S^2$ is obtained by gluing $X_1-f_1^{-1}(D_1)$ and $X_2-f_2^{-1}(D_2)$ along their boundaries via a fiber-preserving orientation-reversing diffeomorphism and extending $f_1$ and $f_2$ in a natural way. 
A Lefschetz fibration is said to be \textit{fiber sum indecomposable} if it cannot be decomposed as a fiber sum of two Lefschetz fibrations each of which has at least one singular point.

For a Lefschetz fibration over $S^2$ with a positive relator and a section, we can determine the fundamental group of $X$ as follows:

\begin{lem}[cf.\cite{GS}]\label{lem2}\rm
Let $P$ be a positive relator $P=t_{v_n}\cdots t_{v_2}t_{v_1}$ in $\Gamma_g$.
Suppose that the corresponding genus-$g$ Lefschetz fibration $f:X_P\to S^2$ admits a section $\sigma$. 
Then, the fundamental group $\pi_1(X)$ is isomorphic to the quotient of $\pi_1(\Sigma_g)$ by the normal subgroup generated by the vanishing cycles $v_1,v_2,\ldots,v_n$. 
\end{lem}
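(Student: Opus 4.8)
The plan is to use the standard handlebody/Kirby-calculus description of a Lefschetz fibration together with van Kampen's theorem, exactly as in \cite{GS}. First I would recall that $X_P$ is built by attaching, to $\Sigma_g \times D^2$, a $2$-handle along each vanishing cycle $v_i$ (framed $-1$ relative to the fiber framing), and then capping off the boundary with a $\Sigma_g$-bundle over $D^2$, i.e. with $\Sigma_g \times D^2$ again. Since the fibration admits a section $\sigma$, the section sphere $\sigma(S^2)$ meets each fiber in a single point; removing a tubular neighborhood of $\sigma(S^2)$ (equivalently, working with the $\Gamma_g^1$-picture coming from the section) lets one arrange that the two copies of $\Sigma_g \times D^2$ are glued in a product fashion away from the handles, so that no extra $1$- or $3$-handles beyond those of $\Sigma_g$ itself contribute, and the $2$-handle cocores do not create new relations except along the $v_i$.

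The key steps, in order: (1) Write $X_P = (\Sigma_g \times D^2) \cup (\text{Lefschetz } 2\text{-handles along } v_1,\dots,v_n) \cup (\Sigma_g \times D^2)$, using that the monodromy $t_{v_n}\cdots t_{v_1}$ is trivial in $\Gamma_g$ so the boundary after the handle attachments is indeed $\Sigma_g \times S^1$ and can be filled by a trivial bundle. (2) Apply van Kampen to the decomposition of $X_P$ into a regular neighborhood of the first $\Sigma_g\times D^2$-piece plus the $2$-handles, and the complementary second $\Sigma_g\times D^2$-piece, with intersection a neighborhood of the fiber $\Sigma_g \times S^1$ (whose $\pi_1$ is $\pi_1(\Sigma_g) \times \Z$). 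Here the presence of the section $\sigma$ is used to kill the $\Z$-factor: the section meets a regular fiber transversally once, so the $S^1$-loop in $\Sigma_g \times S^1$ bounds a disk (a fiber of the normal bundle of $\sigma(S^2)$ restricted to an arc, or simply the section disk over $D_2$) in the complement, hence becomes nullhomotopic. Thus gluing in the second piece contributes no generators and no relations beyond this. (3) Attaching the $2$-handle along $v_i$ adds the relation $v_i = 1$ in $\pi_1$. Putting these together, $\pi_1(X_P) = \pi_1(\Sigma_g)/\langle\!\langle v_1,\dots,v_n\rangle\!\rangle$, the quotient by the normal closure of the vanishing cycles.

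I would present step (2) carefully, since that is where the hypothesis about the section $\sigma$ actually enters and where the subtlety lies: without a section one only gets $\pi_1(X_P) = \pi_1(\Sigma_g)/\langle\!\langle v_1,\dots,v_n\rangle\!\rangle$ after additionally quotienting by (the image of) the $S^1$-fiber class, which need not be automatically trivial. The existence of a section — equivalently, of the lift to $\Gamma_g^1$ with a $t_\delta^{\pm 1}$ boundary term — is exactly what guarantees that the vanishing loop around a generic point of $S^2$ bounds in $X_P$, so the $\Z$-summand dies for free. The remaining verifications (that the Lefschetz $2$-handle attachments impose precisely the relations $v_i=1$ and nothing more, and that the fiber-framing shift by $-1$ is irrelevant at the level of $\pi_1$) are routine and can be quoted from \cite{GS}.

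\begin{rem}
This is a well-known computation; we include the statement only because it is used repeatedly in the sequel to identify $\pi_1(X_P)$ for the explicit positive relators we construct.
\end{rem}
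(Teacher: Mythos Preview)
The paper does not give its own proof of this lemma; it simply states it with the citation ``cf.\ \cite{GS}'' and moves on. Your handlebody/van Kampen argument is precisely the standard one from \cite{GS} that the citation points to, and it is correct.

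One small remark on your step~(2): the $\mathbb{Z}$-factor coming from the $S^1$ in $\Sigma_g\times S^1$ in fact already dies on \emph{both} sides of the van Kampen decomposition, since the loop $\{pt\}\times S^1$ bounds the disk $\{pt\}\times D^2$ in each copy of $\Sigma_g\times D^2$, independently of any section. The genuine role of the section (or, for $g\geq 2$, of the vanishing of $\pi_1(\mathrm{Diff}^+(\Sigma_g))$) is rather to guarantee that the closing-up piece is the \emph{trivial} $\Sigma_g$-bundle over $D^2$ and to provide coherent basepoints in all fibers so that the $v_i$ are unambiguously elements of a fixed $\pi_1(\Sigma_g)$. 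Your conclusion is unaffected, but you may want to rephrase where exactly the hypothesis enters.
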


\subsection{Signatures of Lefschetz fibrations}

\

This subsection gives two results about the signatures of Lefschetz fibrations.

Let $\Delta_{g}$ be the \textit{hyperelliptic mapping class group} of genus $g$, i.e., 
the subgroup of $\Gamma_g$ consisting of those mapping classes commuting with the isotopy class of an involution $\iota$ shown in Figure~\ref{iota}. 
Note that $\Delta_g=\Gamma_g$ for $g=1,2$ and that $t_c$ is in $\Delta_g$ if and only if $\iota(c)=c$. 

 \begin{figure}[hbt]
 \centering
      \includegraphics[width=7.5cm]{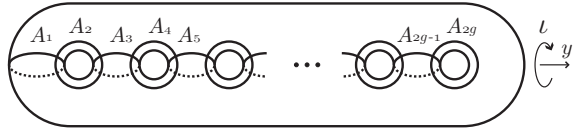}
      \caption{The involution $\iota$ of $\Sigma$ and the curves $A_1,A_2,\ldots,A_{2g}$ on $\Sigma_g$.}
      \label{iota}
 \end{figure}

A genus-$g$ Lefschetz fibration $f_P:X_\varrho\rightarrow S^2$ with a positive relator $P=t_{v_1}\cdots t_{v_n}$  is called \textit{hyperelliptic} if each $t_{v_i}$ in $\Delta_g$. 
To compute the signatures of Lefschetz fibrations, we present the Matsumoto-Endo's signature formula for hyperelliptic Lefschetz fibrations. 
\begin{thm}[\cite{Ma1},\cite{Ma} $(g=1,2)$,\cite{E} $(g\geq 3)$]\label{sign}
Let us consider a genus-$g$ hyperelliptic Lefschetz fibration $f_P:X_P\rightarrow S^2$ with $n$ nonseparating and $s=\Sigma_{h=1}^{[g/2]}s_h$ separating vanishing cycles, where $s_h$ is the number of separating vanishing cycles that separate $\Sigma_g$ into two surfaces, one of which has genus $h$. 
Then, we have
\begin{eqnarray*}
\sigma(X_P)=-\frac{g+1}{2g+1}n+\sum_{h=1}^{[g/2]}\left(\frac{4h(g-h)}{2g+1}-1\right)s_{h}.
\end{eqnarray*}
\end{thm}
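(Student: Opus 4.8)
The plan is to reduce the computation of $\sigma(X_P)$ to the known signature of a single "universal" hyperelliptic Lefschetz fibration, namely the one coming from the hyperelliptic relator of Matsumoto type, and then to argue that the signature depends only on the numbers $n$ and $s_h$ of nonseparating and genus-$h$ separating vanishing cycles. First I would recall that every hyperelliptic Lefschetz fibration $f_P$ admits a double branched cover description: the involution $\iota$ induces on the total space $X_P$ a $\Z/2$-action whose quotient is a (blown-up) ruled surface $Y$, and $X_P \to Y$ is a double cover branched over a curve $B$ which is a union of fiber components and a multisection meeting the generic fiber in $2g+2$ points. The vanishing cycles in $\Delta_g$ correspond, under the hyperelliptic quotient $\Sigma_g \to S^2$, to arcs connecting branch points (for nonseparating cycles, since $\iota(v_i)=v_i$ and $v_i$ passes through $2$ branch points) or to loops enclosing an even number of branch points (for separating cycles). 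The geometry of $B$ near each singular fiber is thus dictated by which type of vanishing cycle occurs: a nonseparating cycle produces a node of $B$ of "type $(1,1)$" and a genus-$h$ separating cycle produces a branch locus singularity of "type $(2h+1, 2h+1)$", contributing a $(-2)$-sphere configuration after resolution.

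The key step is then to apply the $G$-signature theorem (or, equivalently, Novikov additivity together with local signature contributions at the singularities of $B$) to the double cover $X_P \to Y$: one gets
\begin{eqnarray*}
\sigma(X_P) = 2\,\sigma(Y) - \frac{1}{2}\, [B]^2 - (\text{correction terms from the singularities of } B),
\end{eqnarray*}
where each nonseparating vanishing cycle and each genus-$h$ separating vanishing cycle contributes a fixed, computable local term. Since $Y$ is a rational ruled surface blown up once per singular fiber and $[B]^2$ is determined by $n$ and the $s_h$, the whole expression is a $\Z$-linear function of $n$ and the $s_h$ with no constant term (the empty fibration over $S^2$ has $\sigma=0$), so $\sigma(X_P) = \alpha\, n + \sum_h \beta_h\, s_h$ for universal constants $\alpha,\beta_h$ depending only on $g$. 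To pin down $\alpha$ and $\beta_h$, I would then evaluate this linear form on enough explicit hyperelliptic relators whose total spaces have known signatures — for instance, the chain relators $C_{2h}$, $\overline{C}_k$ and the Matsumoto–Cadavid–Korkmaz relator $W_{2,g}$ of Definition~\ref{MCK} — solving the resulting linear system to obtain $\alpha = -\frac{g+1}{2g+1}$ and $\beta_h = \frac{4h(g-h)}{2g+1} - 1$.

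The main obstacle will be the careful bookkeeping of the local signature contributions at the singular fibers of the branch curve $B$ after resolving them, since the quotient $Y$ itself acquires singularities that must be resolved before the double-cover signature formula applies, and tracking how blow-ups on $Y$ interact with the pullback of the branch divisor is delicate — this is exactly where Endo's argument (and Matsumoto's for $g=1,2$) does the real work. An alternative, perhaps cleaner, route that I would keep in reserve is to use the fact (due to Endo) that the hyperelliptic mapping class group has $H^1(\Delta_g;\Z) $ and the signature cocycle under control, so that $\sigma$ as a function on hyperelliptic relators is governed by a single $\R$-valued class; one then only needs to identify that class on a generating set of relators, which again reduces to finitely many explicit computations. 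Either way, once the linearity in $(n, s_1,\dots,s_{[g/2]})$ is established, the formula follows from finitely many sample fibrations, and I would present the proof by citing \cite{Ma1},\cite{Ma},\cite{E} for the detailed local computations while emphasizing the double-cover/linearity structure.
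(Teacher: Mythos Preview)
The paper does not prove this theorem at all: it is quoted from the literature (Matsumoto for $g=1,2$ and Endo for $g\geq 3$) and used only as a black box, once, in the proof of Theorem~\ref{thm3} to compute $\sigma(X_{C_{2g}})$. There is therefore no proof in the paper against which to compare your proposal.

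As for the proposal on its own terms: your ``alternative route'' via the Meyer signature cocycle on $\Delta_g$ is in fact essentially Endo's actual method in \cite{E}, and is the cleaner of the two you sketch. The double-cover/$G$-signature approach you put first is also workable, but your description of the branch-locus singularities is imprecise (a genus-$h$ separating vanishing cycle does not give a ``type $(2h+1,2h+1)$'' singularity of $B$ in any standard sense; what really happens is that the singular fiber of the quotient ruled surface has a particular configuration of $(-2)$-curves in the branch locus), and turning that into the exact coefficients $-\frac{g+1}{2g+1}$ and $\frac{4h(g-h)}{2g+1}-1$ requires precisely the local computations you defer to the references. The linearity-plus-sample-fibrations strategy is sound, but note that to determine all $[g/2]+1$ unknowns you need $[g/2]+1$ independent explicit fibrations with known signature, and the relators you list (chain relators and $W_{2,g}$) are not obviously linearly independent in the relevant sense without further argument.
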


By the works of Endo-Nagami \cite{EN}, we see the behavior of signatures of Lefschetz fibrations under a monodromy substitution as follows. 
\begin{prop}[\cite{EN}, Theorem 4.3, Definition 3.3, Lemma 3.5 and Proposition 3.9, 3.10 and 3.12]\label{EN}
Let $B$, $L$ and $C_{2h+1}$ be the braid relator, the lantern relator and the odd chain relator in Definition~\ref{braid},~\ref{lantern} and~\ref{chainr}, respectively. 
We assume that those relators are in $\Sigma_g$. 

Let $f_{P_i}:X_{P_i}\to S^2$ be a genus-$g$ Lefschetz fibration with a positive relator $P_i$ $(i=1,2)$. 
Suppose that $P_2$ is obtained by applying a $R^\phi$-substitution to $P_1$, where $\phi$ is a mapping class and $R$ is a relator in $\Gamma_g$. 
Then, the following holds. 
\begin{enumerate}
\setlength{\parskip}{0cm} 
\setlength{\itemsep}{0.05cm} 
\item[(a)] If $R=B$, then $\sigma(X_{P_2})=\sigma(X_{P_1})$, 

\item[(b)] If $R=L$, then $\sigma(X_{P_2})=\sigma(X_{P_1})+1$. Hence, if $R=L^{-1}$, then $\sigma(X_{P_2})=\sigma(X_{P_1})-1$, 

\item[(c)] Assume that both $d_1$ and $d_2$ are not null-homotopic in $\Sigma_g$. 
If $R=C_{2h+1}$, then $\sigma(X_{P_2})=\sigma(X_{P_1})+2h(h+2)$. Hence, if $R=C_{2h+1}^{-1}$, then $\sigma(X_{P_2})=\sigma(X_{P_1})-2h(h+2)$. 
\end{enumerate}
\end{prop}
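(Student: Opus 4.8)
The plan is to follow Endo--Nagami's method~\cite{EN} of \emph{localizing the signature}. An $R^\phi$-substitution changes $X_{P_1}$ only inside a compact piece, so by Novikov additivity the induced change of signature equals an integer $I(R)$ that depends on the relator $R$ alone --- not on the partial conjugating map $\phi$, not on the surrounding word, and, as it will turn out, not even on the ambient genus; one then computes $I(B)$, $I(L)$ and $I(C_{2h+1})$ by hand. Meyer's signature cocycle $\tau_g\colon\Gamma_g\times\Gamma_g\to\Z$, which computes the signature of a $\Sigma_g$-bundle over a surface, and the hyperelliptic signature formula of Theorem~\ref{sign} are the tools for those computations.

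First I would make the localization precise. Write $R=t_{v_1}\cdots t_{v_k}t_{d_l}^{-1}\cdots t_{d_1}^{-1}=1$ and set $\psi:=t_{v_1}\cdots t_{v_k}=t_{d_1}\cdots t_{d_l}\in\Gamma_g$. Arrange the critical values of $P_1$ coming from the subword $t_{d_1}\cdots t_{d_l}$ to lie in a small disk $D_0\subset S^2$, so that $Y_-:=f_{P_1}^{-1}(D_0)$ is a Lefschetz fibration over a disk with vanishing cycles $d_1,\dots,d_l$ and boundary monodromy $\psi$, while all the remaining nodes lie over $S^2\setminus\mathrm{int}\,D_0$. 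Since $\phi$ fixes each $d_i$, it commutes with $\psi$, so the disk-fibration $Y_+$ with vanishing cycles $\phi(v_1),\dots,\phi(v_k)$ again has boundary monodromy $\phi\psi\phi^{-1}=\psi$, and re-gluing $Y_+$ in place of $Y_-$ produces exactly $X_{P_2}$. Novikov additivity along the common boundary --- the $\Sigma_g$-bundle over $S^1$ with monodromy $\psi$ --- then yields
\[
\sigma(X_{P_2})-\sigma(X_{P_1})=\sigma(Y_+)-\sigma(Y_-).
\]
The fibrewise diffeomorphism induced by $\phi$ identifies $Y_+$ with the disk-fibration whose vanishing cycles are $v_1,\dots,v_k$, so the right-hand side is independent of $\phi$; a second application of Novikov additivity, shrinking $D_0$ onto a regular neighbourhood of the subsurface supporting $R$, shows that it depends neither on $g$ nor on the embedding of $R$ into $\Sigma_g$. (This is where the hypothesis that $d_1,d_2$ --- and, for the lantern, the interior curves $\alpha,\beta,\gamma$ --- be essential is used: otherwise some of the Dehn twists involved are trivial and $R$ collapses to a shorter relation with a different invariant.) Denote this common value by $I(R)$; since an $R^\phi$-substitution replaces $t_{d_1}\cdots t_{d_l}$ by $t_{\phi(v_1)}\cdots t_{\phi(v_k)}$ (Definition~\ref{substitution}), one has $I(R^{-1})=-I(R)$, which accounts for the ``hence'' clauses in (b) and (c).

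It then remains to compute $I(R)$ in the three cases. For $R=B$, the requirement $\phi(d_i)=d_i$ forces $\phi$ to fix both curves of the braid relator, so the substitution amounts to commuting two adjacent Dehn twists (when $i(\alpha,\beta)=0$) or to a braid move on three adjacent Dehn twists (when $i(\alpha,\beta)=1$); in either case it is realized by elementary transformations, so $X_{P_1}\cong X_{P_2}$ and $I(B)=0$ by Kas--Matsumoto~\cite{Kas,Ma}, cf.~(\ref{isomorphic}). For $R=L$, the piece $Y_-$ carries four nodes and $Y_+$ carries three, with the same boundary, and a direct computation --- evaluating Meyer's cocycle on the lantern word, equivalently recognizing a lantern substitution as the rational blow-down of an embedded $(-4)$-sphere --- gives $\sigma(Y_+)-\sigma(Y_-)=1$, so $I(L)=1$. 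For $R=C_{2h+1}$, hyperellipticity is the decisive point: since $I(C_{2h+1})$ is independent of the embedding, we may take the chain $\alpha_1,\dots,\alpha_{2h+1}$ invariant under the involution $\iota$ of Figure~\ref{iota}, so that the disk-fibration $Y_+$, with its $(2h+1)(2h+2)$ nodes, becomes a piece of a hyperelliptic Lefschetz fibration; the local form of the Matsumoto--Endo formula (Theorem~\ref{sign}) then computes $\sigma(Y_+)$, and subtracting $\sigma(Y_-)$, where $Y_-$ has only the two nodes along $d_1$ and $d_2$, gives $\sigma(Y_+)-\sigma(Y_-)=2h(h+2)$, i.e.\ $I(C_{2h+1})=2h(h+2)$.

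I expect the odd-chain computation to be the main obstacle: one has to fix an $\iota$-symmetric model of the chain, reconcile the genus-$h$ local hyperelliptic contribution with the signature formula of Theorem~\ref{sign} (which is stated in terms of the fibre genus rather than $h$), and evaluate $\sigma(Y_-)$ for the two ``large'' curves $d_1,d_2$, all while keeping orientations straight throughout the Novikov-additivity arguments. The lantern case is the same bookkeeping in a much smaller instance, and, once the localization framework is set up, the braid case is essentially immediate.
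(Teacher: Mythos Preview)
The paper does not give its own proof of this proposition: it is stated as a citation of Endo--Nagami~\cite{EN} (specifically Theorem~4.3, Definition~3.3, Lemma~3.5 and Propositions~3.9, 3.10, 3.12 there), with no argument supplied. Your sketch is therefore not to be compared against anything in the present paper, but against the original source; and indeed what you outline --- localizing the signature change via Novikov additivity, defining a relator invariant $I(R)$ through Meyer's cocycle, and then evaluating it on $B$, $L$, $C_{2h+1}$ --- is precisely the Endo--Nagami framework that the citation points to.

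Your outline is broadly sound, but one point is overstated. You claim that shrinking onto the supporting subsurface shows $I(R)$ is independent of the embedding into $\Sigma_g$. In fact Endo--Nagami's invariant depends in general on the \emph{homological configuration} of the curves in the ambient surface (whether they are separating, and of what type), not merely on the abstract relator; this is exactly why part~(c) carries the hypothesis that $d_1,d_2$ be non-null-homotopic. So the reduction to a single model computation requires knowing that, under the stated hypotheses, all embeddings of the relevant curve system are equivalent up to a diffeomorphism of $\Sigma_g$ --- true here via the change-of-coordinates principle, but worth making explicit rather than absorbing into a Novikov-additivity step. The odd-chain evaluation you flag as the delicate part is indeed the one where the bookkeeping matters; in~\cite{EN} it is carried out by a direct cocycle computation rather than by embedding into a global hyperelliptic fibration, which avoids the genus-mismatch issue you anticipate.
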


\subsection{Non-holomorphicity of Lefschetz fibrations} \label{sec:nonholomorphicity}

\

\begin{defn}\rm
Let $f:X\to S^2$ be a Lefschetz fibration. 
$f$ is \textit{holomorphic} if there are complex structures on both $X$ and $S^2$ with holomorphic projection $f$. 
$f$ is \textit{non-holomorphic} if it is not isomorphic to any holomorphic Lefschetz fibration. 
\end{defn}

Suppose that $g\geq 2$. 
In order to prove Theorem~\ref{thm3} and~\ref{thm2}, we introduce two sufficient conditions for a Lefschetz fibration to be non-holomorphic. 

One comes from the result of Xiao \cite{Xi}. 
For an almost complex 4-manifold $X$, we set $K^2(X):=3\sigma(X)+2e(X)$ and $\chi_h(X):=(\sigma(X)+e(X))/4$. 
Xiao proved the following theorem, called the \textit{slope inequality}:
\begin{thm}[\cite{Xi}]\label{xiao}
Every relatively minimal holomorphic genus-$g$ fibration $f$ on a complex surface $X$ over a complex curve $C$ of genus $k\geq 0$ 
satisfies the inequality 
\begin{align*}
4-4/g\leq \lambda_f, 
\end{align*}
where $\lambda_f:=\dfrac{K^2(X)-8(g-1)(k-1)}{\chi_h(X)-(g-1)(k-1)}$. 
\end{thm}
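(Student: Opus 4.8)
\emph{Proof proposal (following Xiao).} The plan is to run Xiao's argument via the relative canonical bundle, so assume $g\ge 2$ (otherwise $\lambda_f$ carries no content). First I would pass to relative invariants: set $K_f^2:=K_{X/C}^2$ and $\chi_f:=\deg f_*\omega_{X/C}$, where $K_{X/C}=K_X-f^*K_C$. Using $K_X\cdot F=2g-2$ on a general fibre $F$ one gets $K_f^2=K^2(X)-8(g-1)(k-1)$, and the relative Leray/Noether computation gives $\chi_f=\chi_h(X)-(g-1)(k-1)$; hence $\lambda_f=K_f^2/\chi_f$ and the theorem is equivalent to the purely relative inequality $K_f^2\ge\tfrac{4(g-1)}{g}\chi_f$. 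One may assume $\chi_f>0$: otherwise $f$ is isotrivial, $K_f^2=0$ as well, and there is nothing to prove.

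Next I would analyse $V:=f_*\omega_{X/C}$, a rank-$g$ bundle on $C$ of degree $\chi_f$. By Fujita's semipositivity theorem $V$ is nef, so its Harder--Narasimhan filtration $0=V_0\subsetneq V_1\subsetneq\cdots\subsetneq V_n=V$ has semistable graded pieces of strictly decreasing slopes $\mu_1>\cdots>\mu_n\ge 0$ (nonnegativity of $\mu_n$ because quotients of a nef bundle are nef). Write $r_i=\operatorname{rank} V_i$. For each $i$, the evaluation $f^*V_i\to f^*V\to\omega_{X/C}$, after a sequence of blow-ups resolving its base locus, produces a splitting $\omega_{X/C}=M_i+Z_i$ on the blown-up surface with $M_i$ nef, $Z_i$ effective, and $M_i\le M_{i+1}$; put $d_i=M_i\cdot F$, so $0\le d_1\le\cdots\le d_n=2g-2$. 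On a general fibre $V_i$ spans an $r_i$-dimensional subspace of $H^0(F,\omega_F)$ whose mobile part is a special line bundle of degree $d_i$, so Clifford's theorem yields $d_i\ge 2(r_i-1)$ for $i<n$, with the boundary equality $d_n=2(g-1)$ forced for $i=n$.

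The heart of the proof is an intersection-theoretic lower bound for $K_f^2=\omega_{X/C}^2$ in terms of the filtration data. Using that each $M_i$ is nef and that the $M_i$ increase, one bounds $\omega_{X/C}\cdot M_i$ from below by a suitable combination of the $d_j$ and assembles these into Xiao's inequality $K_f^2\ge\sum_{i=1}^{n}(d_{i-1}+d_i)(\mu_i-\mu_{i+1})$ (with $d_0=0$ and $\mu_{n+1}=0$). Meanwhile summation by parts gives $\chi_f=\deg V=\sum_{i=1}^{n}r_i(\mu_i-\mu_{i+1})$. Inserting the Clifford estimates $d_i\ge 2r_i-2$ together with $d_n=2g-2$ turns the desired inequality $K_f^2\ge\tfrac{4(g-1)}{g}\chi_f$ into a finite linear-programming problem in the nonnegative quantities $\mu_i-\mu_{i+1}$ and $\mu_n$; the coefficient $\tfrac{g-1}{g}$ appears because the extremal case is concentrated in the top step $r_n=g$, where Clifford is sharp.

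I expect the main obstacle to be exactly this lower bound for $\omega_{X/C}^2$: one must track the blow-ups and the fixed divisors $Z_i$ carefully, and handle degenerate configurations --- a very short filtration, $\mu_n=0$, or $d_1=0$ --- where crude estimates break down; this is where relative minimality is genuinely used and where Xiao's argument is most delicate. As a fallback I would keep the Cornalba--Harris/Bost route, which derives the same bound from GIT-semistability of the pluricanonically embedded generic fibre; it is cleaner but in its basic form applies to semistable fibrations, so one would first have to pass to a semistable model and compare the two slopes.
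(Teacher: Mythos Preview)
The paper does not give its own proof of this statement: Theorem~\ref{xiao} is quoted from Xiao's paper \cite{Xi} as an external input and is immediately used (via Proposition~\ref{xi}) without any argument supplied. So there is nothing in the paper to compare your proposal against.

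That said, your sketch is a faithful outline of Xiao's original proof in \cite{Xi}: the reduction to the relative invariants $K_f^2$ and $\chi_f=\deg f_*\omega_{X/C}$, the Harder--Narasimhan filtration of $f_*\omega_{X/C}$ combined with Fujita semipositivity, the Clifford bounds $d_i\ge 2(r_i-1)$ on the general fibre, and the key intersection estimate
\[
K_f^2\ \ge\ \sum_{i=1}^{n}(d_{i-1}+d_i)(\mu_i-\mu_{i+1})
\]
are exactly the ingredients Xiao uses. Your identification of the delicate step --- controlling $\omega_{X/C}^2$ through the nef parts $M_i$ while keeping track of blow-ups and the fixed components $Z_i$ --- is accurate, and your remark about the Cornalba--Harris alternative is also standard. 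If you were asked to supply a proof here, this plan would work; but for the purposes of the present paper no proof is expected, only the citation.
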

As a consequence of of Theorem~\ref{xiao}, we have: 
\begin{prop}\label{xi}
If a genus-$g$ Lefschetz fibration $f:X\to S^2$ satisfies the slope inequality $\lambda_f<4-4/g$, then $f$ is non-holomorphic.
\end{prop}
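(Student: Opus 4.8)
The plan is to contrapose: assume $f\colon X\to S^2$ is holomorphic and deduce that the slope inequality $4-4/g\le\lambda_f$ must hold, contradicting the hypothesis $\lambda_f<4-4/g$. First I would observe that if $f$ is holomorphic then, by definition, there are complex structures on $X$ and on $S^2$ making $f$ a holomorphic map; so $f$ is a holomorphic genus-$g$ fibration over the complex curve $C=S^2$ (equivalently $\mathbb{CP}^1$), which has genus $k=0$. Next I would invoke the relative minimality of Lefschetz fibrations noted in Section~\ref{Lefschetz}: by convention (no fiber contains a $(-1)$-sphere) the fibration is relatively minimal as a holomorphic fibration as well. Then Theorem~\ref{xiao} applies verbatim and gives $4-4/g\le\lambda_f$, where $\lambda_f=\dfrac{K^2(X)-8(g-1)(k-1)}{\chi_h(X)-(g-1)(k-1)}$ with $K^2(X)=3\sigma(X)+2e(X)$ and $\chi_h(X)=(\sigma(X)+e(X))/4$; these are purely topological quantities, so the $\lambda_f$ appearing in Theorem~\ref{xiao} coincides with the one in the statement of Proposition~\ref{xi}. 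This contradicts the assumption, so $f$ cannot be holomorphic, and since it is not isomorphic to any holomorphic Lefschetz fibration (the invariants $\sigma,e$, hence $\lambda_f$, are isomorphism invariants by~\eqref{isomorphic}), $f$ is non-holomorphic.

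The one point that needs a word of care — and is the only real obstacle — is the matching of hypotheses between the smooth and holomorphic settings: Theorem~\ref{xiao} requires a \emph{relatively minimal} holomorphic fibration, whereas a priori a holomorphic fibration isomorphic to $f$ need only be smoothly relatively minimal. However, for a holomorphic fibration the two notions agree, since a holomorphically embedded sphere of self-intersection $-1$ in a fiber is exactly a smooth $(-1)$-sphere in that fiber; so relative minimality of $f$ as a Lefschetz fibration transfers directly. I would note that $g\ge 2$ is assumed throughout this subsection, which is also compatible with the range in Theorem~\ref{xiao} ($g\ge 2$ is implicit for the fibration genus to give a nontrivial bound), so no separate low-genus discussion is needed. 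With these remarks the proof is essentially immediate from Theorem~\ref{xiao}.
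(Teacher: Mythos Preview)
Your argument is correct and follows exactly the approach the paper intends: the paper presents Proposition~\ref{xi} simply ``as a consequence of Theorem~\ref{xiao}'' without further detail, and your contrapositive---passing to a holomorphic model over $C=\mathbb{CP}^1$ with $k=0$, invoking relative minimality, and noting that $\lambda_f$ depends only on the isomorphism-invariant quantities $\sigma(X)$ and $e(X)$---is precisely the unpacking of that remark. The extra care you take with the relative-minimality matching and with the definition of ``non-holomorphic'' is appropriate but does not depart from the paper's reasoning.
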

The other comes from the result of Ozbagci and Stipsicz \cite{OS}.
The following theorem can be concluded from the proof of Theorem 1.3 in \cite{OS}: 
\begin{thm}\label{OSnoncomplex}
If a genus-$g$ Lefschetz fibration $f:X\to S^2$ satisfies $\pi_1(X)=\mathbb{Z}\oplus\mathbb{Z}_n$ for some positive integer $n$, then $X$ admits no complex structure with either orientation, so $f$ is non-holomorphic. 
\end{thm}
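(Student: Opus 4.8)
The plan is to recover this statement as a straightforward corollary of the work of Ozbagci and Stipsicz on complex structures on Lefschetz fibration total spaces, together with a routine check on the behaviour under orientation reversal. First I would recall that in \cite{OS} it is shown that if a Lefschetz fibration $f:X\to S^2$ of genus $g\ge 2$ has total space $X$ admitting a complex structure, then (after possibly blowing down, i.e. passing to a relatively minimal model) $X$ is diffeomorphic to one of a restricted list of complex surfaces, and in particular $\pi_1(X)$ must be the fundamental group of a complex surface of a specific type. The key input I would isolate from their Theorem 1.3 and its proof is that a Lefschetz fibration total space carrying a complex structure (with the given orientation) has fundamental group which is either finite, or contains a surface group, or is ``large'' in a suitable sense --- in any case it cannot be $\mathbb{Z}\oplus\mathbb{Z}_n$ for $n\ge 1$ (note $\mathbb{Z}\oplus\mathbb{Z}_0=\mathbb{Z}$ is also covered, and $n=1$ gives $\mathbb{Z}$). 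So the first half of the argument is simply: quote this, observe $\pi_1(X)=\mathbb{Z}\oplus\mathbb{Z}_n$ is not on the list, conclude $X$ admits no complex structure compatible with the given orientation.

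Second, I would handle the opposite orientation. Here the relevant point is that $X$ is the total space of a Lefschetz fibration, hence it contains embedded spheres (generic fibers together with vanishing cycle configurations) and, more usefully, it is a symplectic $4$-manifold with $b^+ \ge 1$; in fact $X$ has a symplectic form for which the fiber is symplectic, so $\overline{X}$ would have to be a complex surface with $b^+(\overline X) = b^-(X)$. One clean route is to invoke the constraint on $\pi_1$ again: the fundamental group is orientation-independent, so if $\overline X$ admitted a complex structure, $\mathbb{Z}\oplus\mathbb{Z}_n$ would again have to appear as the fundamental group of a complex surface, and one can rule this out by the classification of complex surfaces with such small fundamental groups (a complex surface with $\pi_1=\mathbb{Z}\oplus\mathbb{Z}_n$ would be --- after minimalization --- ruled or a surface with $\kappa\ge 0$, and in each Enriques--Kodaira class one checks this group does not occur, e.g. via the structure of the Albanese map, since $b_1=1$ is odd for such a group which already forces non-K\"ahler type, narrowing to class VII or certain elliptic/ruled cases which are then excluded). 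Thus neither orientation admits a complex structure, so in particular $f$ is non-holomorphic since a holomorphic Lefschetz fibration would endow $X$ with a complex structure agreeing with its orientation.

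The main obstacle I anticipate is the bookkeeping in the orientation-reversed case: one has to be careful that the list of possible fundamental groups of complex surfaces that can appear is genuinely orientation-symmetric once phrased purely group-theoretically, and to confirm that $\mathbb{Z}\oplus\mathbb{Z}_n$ is excluded across \emph{all} Enriques--Kodaira classes rather than only the ones that arise as Lefschetz fibration total spaces. Concretely, the delicate point is class VII surfaces (where $b_1=1$), since these have $b_1$ odd and could a priori have $\pi_1$ with a $\mathbb{Z}$ summand; I would need to cite that minimal class VII surfaces with $b_2=0$ are Hopf or Inoue surfaces (whose fundamental groups are known and are not $\mathbb{Z}\oplus\mathbb{Z}_n$), and that $b_2>0$ cases (e.g. with global spherical shells) have fundamental group $\mathbb{Z}$ only in degenerate situations, etc. In the paper's setting this is exactly the content attributed to \cite{OS} (see also \cite{Baykur1}), so in practice I would simply write ``by the argument in the proof of \cite[Theorem 1.3]{OS}, using that $\pi_1(X)$ is orientation-independent and that no complex surface with either orientation has fundamental group $\mathbb{Z}\oplus\mathbb{Z}_n$, $X$ carries no complex structure; since a holomorphic Lefschetz fibration structure would in particular make $X$ complex with its given orientation, $f$ is non-holomorphic.'' That is, the heavy lifting is delegated to \cite{OS}, and the proof here is a two-line deduction.
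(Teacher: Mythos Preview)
Your proposal and the paper agree in substance: the paper gives no proof at all beyond the sentence ``can be concluded from the proof of Theorem~1.3 in \cite{OS}'', and your final paragraph lands on exactly this---delegate the work to \cite{OS} and observe that a holomorphic fibration would in particular make $X$ complex. So at the level of what is actually written, you match the paper.

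That said, your speculative reconstruction of what happens inside \cite{OS} is somewhat off and worth correcting if you intend to keep it. The Ozbagci--Stipsicz argument is not a classification of which fundamental groups can occur for complex Lefschetz fibration total spaces (``finite, or contains a surface group, or large''). Rather, it proceeds as follows: from $\pi_1(X)=\mathbb{Z}\oplus\mathbb{Z}_n$ one reads off $b_1(X)=1$, which is odd, so $X$ (and $\overline{X}$) cannot be K\"ahler; then one runs through the Enriques--Kodaira classes of non-K\"ahler minimal surfaces (class~VII, primary/secondary Kodaira, certain properly elliptic surfaces) and excludes each one using numerical invariants---for instance, class~VII surfaces have $b_2^+=0$, whereas a genus-$g$ Lefschetz fibration over $S^2$ with $g\geq 2$ forces $b_2^+(X)\geq 1$ via the symplectic fiber class, and similar Euler characteristic/signature bounds handle the remaining cases and the reversed orientation. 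So the argument is not purely group-theoretic: the Lefschetz fibration structure enters through $b_2^\pm$ and $e$, not only through $\pi_1$. Your instinct that $b_1$ odd is the key entry point is correct, and your class~VII discussion is on the right track, but the exclusion is by Betti numbers rather than by a finer analysis of $\pi_1$ of Hopf or Inoue surfaces.
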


\section{Non-holomorphic Lefschetz fibration admitting a $(-1)$-section}\label{non-holo-simply}
In this section, we prove Theorem~\ref{thm3} 
\setcounter{section}{1}
\setcounter{thm}{0}
\begin{thm}
For each $g\geq 3$, there is a genus-$g$ non-holomorphic Lefschetz fibration $X\to S^2$ with a $(-1)$-section and $\pi_1(X)=1$ such that it does not satisfy the slope inequality. 
\end{thm}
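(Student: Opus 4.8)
The plan is to produce an explicit positive relator in $\Gamma_g^1$ (so that the corresponding Lefschetz fibration automatically carries a $(-1)$-section) whose total space $X$ is simply connected and whose slope $\lambda_f$ is strictly less than $4-4/g$; non-holomorphicity then follows immediately from Proposition~\ref{xi}, and fiber sum indecomposability from Theorem~\ref{Indecomposable}. The natural starting point is a known simply-connected positive relator lifting to $\Gamma_g^1$ with small signature — the Matsumoto--Cadavid--Korkmaz relator $W_{2,g}$ is a good seed, since in $\Gamma_g$ it is positive and, for $g$ even, its vanishing cycles normally generate $\pi_1(\Sigma_g)$ after collapsing a section, giving $\pi_1(X)=1$ by Lemma~\ref{lem2}. (For $g$ odd one instead uses a hyperelliptic relator built from the chain relators $C_{2h}$, $C_{2h+1}$ on the curves $A_1,\dots,A_{2g}$ of Figure~\ref{iota}, which also yields a simply-connected total space.)

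\medskip

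\noindent\textbf{Step 1: fix a hyperelliptic seed relator.} First I would write down, for each $g\ge 3$, a hyperelliptic positive relator $P_0$ in $\Gamma_g$ admitting a $(-1)$-section, compute $e(X_{P_0})$ from the number of vanishing cycles and $\sigma(X_{P_0})$ from the Matsumoto--Endo formula (Theorem~\ref{sign}), and verify $\pi_1(X_{P_0})=1$ via Lemma~\ref{lem2}. The key numerical point is that such a seed has signature close to the hyperelliptic bound, hence slope $\lambda_{P_0}\ge 4-4/g$, so $P_0$ itself is \emph{not} a violator — the relator must be modified.

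\medskip

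\noindent\textbf{Step 2: lower the slope by substitutions.} The core of the argument is to apply a sequence of monodromy substitutions that keep the total space simply connected and the $(-1)$-section intact while decreasing the slope past the threshold. By Proposition~\ref{EN}, an $L^{-1}$-substitution (inverse lantern) changes $(\sigma,e)$ by $(-1,-3)$ and a $C_{2h+1}^{-1}$-substitution changes $\sigma$ by $-2h(h+2)$ while changing $e$ by a controlled amount; in terms of $K^2=3\sigma+2e$ and $\chi_h=(\sigma+e)/4$ one checks that each such substitution strictly decreases $\lambda_f$. The plan is to locate, inside (a conjugate of) $P_0$, a subword matching the positive part of the lantern relator $L$ (or of $C_{2h+1}$), perform the inverse substitution finitely many times, and after each step re-verify using Lemma~\ref{lem2} that the new vanishing cycles still normally generate $\pi_1(\Sigma_g)$ — the three interior curves $\alpha,\beta,\gamma$ of a lantern are homologous to boundary curves, so replacing them typically does not enlarge $\pi_1(X)$, but this must be checked. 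Iterating enough times (the required number grows with $g$, roughly linearly) pushes $\lambda_f$ below $4-4/g$.

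\medskip

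\noindent\textbf{Main obstacle.} The hard part is the simultaneous bookkeeping: one must exhibit an explicit subword of $P_0$ (up to simultaneous conjugation and elementary transformations) on which a lantern or odd-chain relator genuinely sits as an embedded configuration of curves in $\Sigma_g$, and then confirm that after the substitution the resulting relator (i) is still a genuine positive relator in $\Gamma_g$, (ii) still lifts to $\Gamma_g^1$ with the same boundary multiplicity $-1$ (so the $(-1)$-section survives), and (iii) still kills $\pi_1(\Sigma_g)$. Keeping all three invariants under control through a $g$-dependent number of substitutions, while tracking the effect on $\sigma$ and $e$ precisely enough to certify the strict slope inequality, is the delicate bottleneck; the substitution calculus of Section~\ref{MCG} together with the signature bookkeeping of Proposition~\ref{EN} is exactly the toolkit designed to handle it.
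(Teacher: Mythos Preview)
Your overall strategy---start from a hyperelliptic seed relator in $\Gamma_g^1$ carrying a $(-1)$-section, then apply signature-lowering substitutions and invoke Proposition~\ref{xi}---is exactly the paper's approach. However, two concrete points need correction. First, the seed $W_{2,g}$ does not work: for $g\geq 3$ the Matsumoto--Cadavid--Korkmaz fibration is \emph{not} hyperelliptic (so Theorem~\ref{sign} does not apply to it), and its total space is not simply connected---in fact $\pi_1(X_{W_{2,g}})\cong\pi_1(\Sigma_{[g/2]})$, as the computation of $G_1$ in the proof of Theorem~\ref{thm2} shows. The paper instead takes the even chain relator $C_{2g}=(t_{A_1}\cdots t_{A_{2g}})^{4g+2}t_{a_{g+1}}^{-1}$ as the seed; this \emph{is} hyperelliptic, lifts to $\Gamma_g^1$, and has $\pi_1=1$. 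Second, your claim that one needs roughly linearly-in-$g$ many substitutions is off: a direct computation from Theorem~\ref{sign} gives $\sigma(X_{C_{2g}})=-4g(g+1)$ and hence $\lambda_{f_{C_{2g}}}=4-4/g$ \emph{exactly}, so a \emph{single} $L^{-1}$-substitution already yields $\lambda=4-4/g-1/g^2<4-4/g$.

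The genuine gap in your plan is the step you flagged as ``bookkeeping'': locating a lantern subword in the seed. In $C_{2g}$ this is not possible directly, since consecutive chain curves $A_i$ meet once rather than twice and no triple among them bounds a four-holed sphere as the interior curves of a lantern must. The paper's key construction is a signature-neutral detour: using Lemma~\ref{lem1} and two $C_{2g-1}^{-1}$-substitutions to produce copies of $t_{a_g}t_{a_g'}$, then reinserting two $C_{2g-1}^{\psi_i}$-substitutions twisted by carefully chosen diffeomorphisms $\psi_1,\psi_2,\psi_3$ (fixing $a_g,a_g'$) so that the resulting relator contains $t_{e_1},t_{a_2},t_{e_2}$ for the specific curves of Figure~\ref{Lcurves}, which \emph{do} form a lantern with boundary $A_1,A_3,A_5,a_3$. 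The four chain substitutions cancel in signature by Proposition~\ref{EN}(c), the final $L^{-1}$ contributes $-1$, and simple connectivity survives because the resulting vanishing cycles still contain a full (conjugated) chain $t_{e_1}(\psi_1(A_1)),\dots,t_{e_1}(\psi_1(A_{2g}))$. This manufacturing of a lantern configuration via matched chain substitutions is the heart of the proof, not an afterthought.
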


To prove this, we need a lemma. 
Suppose $g\geq 3$. 
Let $\Sigma_g^1$ be the surface of genus $g$ with one boundary component obtained from $\Sigma_g$ by removing the open disk whose boundary curve is $a_{g+1}$ (cf. Figure~\ref{generator}). 
Let us consider $A_1,A_2,\ldots,A_{2g}$ be the simple closed curves on $\Sigma_g^1$ (cf. Figure~\ref{curveAi}) defined as follows: $A_1=a_1$, $A_2 = b_1$, $A_{2h-1}=a_{h-1}a_{h}^{-1}$ and $A_{2h}=b_h$ for $h=2, 3, \cdots, g$. 

 \begin{figure}[hbt]
 \centering
      \includegraphics[width=7.5cm]{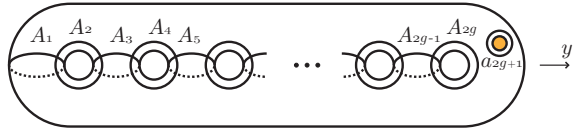}
      \caption{The curves $A_1,A_2,\ldots,A_{2g}$ on $\Sigma_g^1$.}
      \label{curveAi}
 \end{figure}

\setcounter{section}{4}
\setcounter{thm}{0}
\begin{lem}\label{lem1}
$(t_{A_1}t_{A_2}\cdots t_{A_{2g}})^{2g+1} = (t_{A_1}t_{A_2}\cdots t_{A_{2g-1}})^{2g}t_{A_{2g}}\cdots t_{A_2}t_{A_1}t_{A_1}t_{A_2}\cdots t_{A_{2g}}$.
\end{lem}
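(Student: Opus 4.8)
The identity to prove is purely a statement in the mapping class group $\Gamma_g^1$ (or even in $\Gamma_g$), so the plan is to manipulate the word $(t_{A_1}t_{A_2}\cdots t_{A_{2g}})^{2g+1}$ by peeling off one copy of the chain $t_{A_1}t_{A_2}\cdots t_{A_{2g}}$ from the front and conjugating it across the remaining $(t_{A_1}\cdots t_{A_{2g}})^{2g}$. More precisely, write $c = t_{A_1}t_{A_2}\cdots t_{A_{2g-1}}$ and $t = t_{A_{2g}}$, so that the left-hand side is $(ct)^{2g+1}$ and the right-hand side is $c^{2g}\,t^{-1}\cdots$ — wait, I should phrase it as: the right-hand side is $(ct)^{2g}$ with the last $t_{A_{2g}}$ stripped, followed by $t_{A_{2g}}t_{A_{2g-1}}\cdots t_{A_2}t_{A_1}t_{A_1}t_{A_2}\cdots t_{A_{2g}}$. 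The key observation is that the ``tail'' $t_{A_{2g}}t_{A_{2g-1}}\cdots t_{A_1}t_{A_1}\cdots t_{A_{2g}}$ is exactly the Dehn twist along the image of one of the $A_i$ under the chain rotation, or more cleanly, it equals $D\,(t_{A_1}\cdots t_{A_{2g}})$ for a suitable Dehn twist $D$; this is the standard relation that appears when one ``factors out'' a boundary twist from the $(2g+1)$st power of a chain.

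\textbf{Key steps, in order.} First I would recall the basic relation in $\Gamma_g^1$ governing the chain $A_1,\dots,A_{2g}$: the regular neighborhood of $A_1\cup\cdots\cup A_{2g}$ is a genus-$g$ surface with one boundary component $\delta = a_{g+1}$, and one has the $2$-chain / $(2g+1)$-periodicity relation $(t_{A_1}t_{A_2}\cdots t_{A_{2g}})^{2g+1} = $ (some central element times a product that can be rewritten). The cleanest route is to use the well-known fact that conjugation by the ``rotation'' $\rho := t_{A_1}t_{A_2}\cdots t_{A_{2g}}$ permutes the curves $A_i$ up to the hyperelliptic involution; in particular $\rho$ sends $A_{2g}$ to $A_1$ (or to a curve isotopic to $A_1$) after the appropriate power. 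Second, I would use this to rewrite $\rho^{2g+1} = \rho^{2g}\cdot\rho$ and then push the single factor $\rho = t_{A_1}\cdots t_{A_{2g}}$ to the right through nothing — instead, the real content is to replace the leading $\rho$ inside $\rho^{2g+1}$ by its ``reversed'' form: using the elementary identity $t_{A_1}t_{A_2}\cdots t_{A_{2g}} = (t_{A_1}t_{A_2}\cdots t_{A_{2g-1}})\, t_{A_{2g}}$ and the braid/commutation relations among the $t_{A_i}$ (namely $t_{A_i}t_{A_j}=t_{A_j}t_{A_i}$ for $|i-j|\ge 2$ and the braid relation for $|i-j|=1$), one shows that $\rho^{2g+1}$ can be reorganized so that $2g$ of the ``$\rho$-blocks'' collapse into $(t_{A_1}\cdots t_{A_{2g-1}})^{2g}$ and the leftover block becomes the palindromic tail $t_{A_{2g}}\cdots t_{A_1}t_{A_1}\cdots t_{A_{2g}}$. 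Concretely this is a telescoping computation: $(ct)^{2g+1} = c^{2g+1}\cdot(\text{product of conjugates of }t)$, and one identifies the product of conjugates with the stated tail by inducting on the exponent and repeatedly applying $t_{A_{2g}}$ commutes with $t_{A_i}$ for $i\le 2g-2$ together with the single braid relation between $t_{A_{2g-1}}$ and $t_{A_{2g}}$.

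\textbf{Main obstacle.} The essential difficulty is the bookkeeping in the telescoping step: one must track how the chain $t_{A_1}\cdots t_{A_{2g-1}}$ fails to commute past $t_{A_{2g}}$ only through the single adjacent pair $(A_{2g-1},A_{2g})$, and then verify that after $2g$ repetitions the accumulated ``defect'' terms assemble exactly into $t_{A_{2g}}t_{A_{2g-1}}\cdots t_{A_2}t_{A_1}t_{A_1}t_{A_2}\cdots t_{A_{2g}}$ and nothing more. I expect the proof will proceed by establishing the auxiliary identity $(t_{A_1}\cdots t_{A_{2g}})\,(t_{A_1}\cdots t_{A_{2g-1}}) = (t_{A_1}\cdots t_{A_{2g-1}})\,(t_{A_1}\cdots t_{A_{2g}})$ — i.e.\ that $t_{A_1}\cdots t_{A_{2g-1}}$ commutes with $\rho$, which is immediate since it is a subword of $\rho$ conjugated appropriately — and then the lemma follows by a short induction moving one $\rho$-factor at a time to the left past the accumulated $c$-power, the base case being the single braid relation $t_{A_{2g-1}}t_{A_{2g}}t_{A_{2g-1}}=t_{A_{2g}}t_{A_{2g-1}}t_{A_{2g}}$. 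Everything here takes place already in $\Gamma_g^1$, so no subtlety about lifting or about the boundary twist $t_\delta$ enters; it is entirely a word identity driven by the chain relations.
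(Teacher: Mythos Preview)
Your overall instinct is right---the identity lives entirely in the subgroup generated by $t_{A_1},\dots,t_{A_{2g}}$ and follows from the braid and commutation relations alone, which is exactly what the paper's (one-line) proof says. However, the concrete mechanism you settle on in the final paragraph contains a genuine error. You assert that $c:=t_{A_1}\cdots t_{A_{2g-1}}$ commutes with $\rho:=t_{A_1}\cdots t_{A_{2g}}=ct_{A_{2g}}$, calling this ``immediate since it is a subword of $\rho$ conjugated appropriately''. This is false: $c\rho=\rho c$ is equivalent to $ct_{A_{2g}}=t_{A_{2g}}c$, and since every factor of $c$ except $t_{A_{2g-1}}$ commutes with $t_{A_{2g}}$, this in turn reduces to $t_{A_{2g-1}}t_{A_{2g}}=t_{A_{2g}}t_{A_{2g-1}}$, which fails because $A_{2g-1}$ and $A_{2g}$ meet in a single point. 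Consequently the induction you sketch (``moving one $\rho$-factor at a time to the left past the accumulated $c$-power'') cannot work as stated.

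Your earlier telescoping idea, writing $(ct)^{2g+1}=c^{2g+1}\cdot (c^{-2g}tc^{2g})\cdots(c^{-1}tc)\cdot t$ with $t=t_{A_{2g}}$, is a legitimate starting point; the remaining task is to show that $c$ times this product of conjugates equals the palindromic tail $t_{A_{2g}}\cdots t_{A_1}t_{A_1}\cdots t_{A_{2g}}$, which still requires a careful induction using the braid relations (and is essentially the standard identity for the Garside element in $B_{2g+1}$). None of the additional structure you bring in---the hyperelliptic involution, boundary twists, central elements---plays any role here; the statement is already a word identity in the braid group and should be argued purely at that level, as the paper does.
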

\begin{proof}
The proof follows from the braid relations $t_{A_i}t_{A_{i+1}}t_{A_i} = t_{A_{i+1}}t_{A_i}t_{A_{i+1}}$ and $t_{A_i}t_{A_j}=t_{A_j}t_{A_i}$ for $|i-j|>1$ (i.e. by applying $B$-substitutions to the left side).
\end{proof}

We now prove Theorem~\ref{thm3}. 
\begin{proof}[Proof of Theorem~\ref{thm3}]
Suppose $g\geq 3$. 
Let us consider the following chain relators $C_{2g}$ and $C_{2g+1}$:
\begin{align*}
&C_{2g}=(t_{A_1}t_{A_2}\cdots t_{A_{2g}})^{4g+2} t_{a_{g+1}}^{-1}, &C_{2g-1}=(t_{A_1}t_{A_2}\cdots t_{A_{2g-1}})^{2g} t_{a_g}^{-1}t_{a_g^\prime}^{-1},
\end{align*}
where $a_g$ and $a_g^\prime$ are the curves as shown in Figure~\ref{Korkmaz-even} and~\ref{Korkmaz-odd}. 
By Lemma~\ref{lem1} and the even chain relator $C_{2g}$, we obtain the following relator $C_{2g}^\prime$:
\begin{eqnarray*}
C_{2g}^\prime
&=&\{(t_{A_1}t_{A_2}\cdots t_{A_{2g-1}})^{2g} \cdot t_{A_{2g}}\cdots t_{A_2}t_{A_1}t_{A_1}t_{A_2}\cdots t_{A_{2g}}\}^2 t_{a_{g+1}}^{-1}. 
\end{eqnarray*}
By applying $C_{2g-1}^{-1}$-substitution to $C_{2g}$ twice, we get a new relator $H$ in $\Gamma_g^1$:
\begin{eqnarray*}
H=(t_{a_g}t_{a_g^\prime} \cdot t_{A_{2g}}\cdots t_{A_2}t_{A_1}t_{A_1}t_{A_2}\cdots t_{A_{2g}})^2 t_{a_{g+1}}^{-1}. 
\end{eqnarray*}

 \begin{figure}[hbt]
 \centering
      \includegraphics[width=5.5cm]{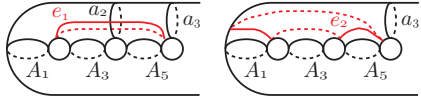}
      \caption{The curves that give a Lantern relator.}
      \label{Lcurves}
 \end{figure}

Consider the curves on $\Sigma_g^1$ in Figure~\ref{Lcurves}. 
Since $A_1,a_2,e_1,$ and $e_2$ are non-separating curves on the subsurface of genus $g-1$ with two boundary components $a_g$ and $a_g^\prime$, there are diffeomorphisms $\psi_1$, $\psi_2$ and $\psi_3$ in $\Gamma_g^1$ such that $\psi_1(A_1)=a_2$, $\psi_2(A_1)=e_1$, $\psi_3(A_1)=e_2$, and each $\psi_i$ is identical near $a_g$ and $a_g^\prime$. 
Then, we have the following relator $H^{\psi_1}$:
\begin{align*}
H^{\psi_1}=(t_{a_g}t_{a_g^\prime} \cdot t_{\psi_1(A_{2g})}\cdots t_{\psi_1(A_2)}t_{a_2}t_{a_2}t_{\psi_1(A_2)}\cdots t_{\psi_1(A_{2g})})^2 \cdot t_{a_{g+1}}^{-1}. 
\end{align*}
Applying $C_{2g-1}^{\psi_2}$- and $C_{2g-1}^{\psi_3}$-substitutions to $H^{\psi_1}$, we get a relator $H^\prime$:
\begin{align*}
H^\prime=&(t_{e_1}t_{\psi_2(A_2)}\cdots t_{\psi_2(A_{2g-1})})^{2g}t_{\psi_1(A_{2g})}\cdots t_{\psi_1(A_2)}t_{a_2}t_{a_2}t_{\psi_1(A_2)}\cdots t_{\psi_1(A_{2g})} \\
&\cdot (t_{e_2}t_{\psi_3(A_2)}\cdots t_{\psi_3(A_{2g-1})})^{2g}t_{\psi_1(A_{2g})}\cdots t_{\psi_1(A_2)}t_{a_2}t_{a_2}t_{\psi_1(A_2)}\cdots t_{\psi_1(A_{2g})} \cdot t_{a_{g+1}}^{-1}.
\end{align*}
Here, let us consider a word $t_c \cdot t_{v_1}t_{v_2}\cdots t_{v_k}$. 
By repeating elementary transformations to this word, we obtain the word $t_{t_c(v_1)}t_{t_c(v_2)}\cdots t_{t_c(v_k)} \cdot t_c$. 
Therefore, since $H^\prime$ is a positive relator including $t_{e_1}$, $t_{a_2}$ and $t_{e_2}$ in this order, we can put them together to the right side of the word to obtain a relator in the form 
\begin{align*}
H^{\prime\prime}=T \cdot t_{e_1}t_{a_2}t_{e_2} \cdot t_{a_{g+1}}^{-1},
\end{align*}
where $T$ is a product of $8g^2+4g-3$ right-handed Dehn twists. 
Let $L$ denote the lantern relator $L=t_{e_1}t_{a_2}t_{e_2}t_{A_1}^{-1}t_{a_3}^{-1}t_{A_5}^{-1}t_{A_3}^{-1}$. 
Finally, we do $L^{-1}$-substitution to $H^{\prime\prime}$, to obtain the following relator $I$ in $\Gamma_g^1$: 
\begin{align*}
I=T \cdot t_{A_3}t_{A_5}t_{a_3}t_{A_1} \cdot t_{a_{g+1}}^{-1}.
\end{align*}

The relator $I$ derives a positive relator $\widehat{I}$ in $\Gamma_g$. 
Thus, $\widehat{I}$ gives a genus-$g$ Lefschetz fibration $f_{\widehat{I}}:X_{\widehat{I}}\to S^2$ which admits a $(-1)$-section.

We see that a genus-$g$ Lefschetz fibration $f_{\widehat{I}}: X_{\widehat{I}} \to S^2$ has a $2g(4g+2)+1$ singular fibers. 
Hence, we have 
\begin{align*}
e(X_{\widehat{I}})=8g^2+5. 
\end{align*}
Here, note that $C_{2g}$ is a positive relator in $\Gamma_g$. 
This gives a genus-$g$ Lefschetz fibration $f_{C_{2g}} : X_{C_{2g}} \to S^2$ with $2g(4g+2)$ nonseparating singular fibers. 
In particular, this fibration is hyperelliptic since $\iota(A_i)=A_i$ for each $i=1,2,\ldots,2g$ (see Figure~\ref{iota}). 
Therefore, we have $\sigma(X_{C_{2g}})=-4g(g+1)$ by Theorem~\ref{sign}. 
Since $I$ is obtained from $C_{2g}$ by some $B$-substitutions, two $C_{2g-1}^{-1}$-substitutions, $C_{2g-1}^{\psi_2}$- and $C_{2g-1}^{\psi_3}$-substitutions, other several $B$-substitutions, and one $L^{-1}$-substitution, by (\ref{isomorphic}) and Proposition~\ref{EN}, we have 
\begin{align*}
\sigma(X_{\widehat{I}}) &= \sigma(X_{C_{2g}})-1 \\ 
                        &=-4g(g+1)-1. 
\end{align*}
This gives $\lambda_{f_{\widehat{I}}}=4-4/g-1/g^2<4-4/g$. 
By Proposition~\ref{xi}, this fibration is non-holomorphic. 

It is easy to check that $\widehat{I}$ includes the Dehn twist about the curve $t_{e_1}(\psi_1(A_i))$ for $1\leq i\leq 2g$. 
Since $f_{\widehat{I}}$ admits a section, by Lemma~\ref{lem2} we have 
\begin{align*}
\pi_1(X_{\widehat{I}}) \ \subset \ \pi_1(\Sigma_g)/ \langle t_{e_1}(\psi_1(A_1)), \cdots, t_{e_1}(\psi_1(A_{2g}))  \rangle. 
\end{align*}
On the other hand, it is easy to check that 
\begin{align*}
\pi_1(\Sigma_g)/ \langle t_{e_1}(\psi_1(A_1)), \cdots, t_{e_1}(\psi_1(A_{2g})) &= \pi_1(\Sigma_g)/ \langle A_1, \cdots, A_{2g}  \rangle \\
&=1,
\end{align*}
hence $\pi_1(X_{\widehat{I}}) =1$. 

This completes the proof of Theorem~\ref{thm3}. 
\end{proof}

\begin{rem}
      We do not provide a monodromy factorization of $f_{\widehat{I}}$ explicitly, however, we can obtain it by giving explicit $\psi_j(A_i)$ for $j=1,2,3$ and $i=1,2,\ldots,2g$. 
\end{rem}

\begin{rem}
      All vanishing cycles of the Lefschetz fibration $f_{\widehat{I}}$ are nonseparating since all curves of the lantern relator 
      employeed in the proof of Theorem~\ref{thm3} are nonseparating. 
      For $g\geq 3$, we can consider a lantern relator such that 
      six curves are nonseparating and one curve, denoted by $s_h$, is a separating, which separates $\Sigma_g^1$ into two subsurfaces $\Sigma_h^1$ and $\Sigma_{g-h}^2$ for $h\geq 2$. 
      Then, a similar argument to the proof of Theorem~\ref{thm3} gives a genus-$g$ Lefschetz fibration with a $(-1)$-section, the simply connected total space and the vanishing cycle $s_h$ $(h=2,3,\ldots,g-1)$ and violating the slope inequality. 
      Therefore, we can construct at least $g-1$ different genus-$g$ Lefschetz fibrations with the conditions in Theorem~\ref{thm3}. 
\end{rem}

\begin{rem}
      Miyachi and Shiga \cite{MS} produced genus-$g$ Lefschetz fibrations over $\Sigma_{2m}$ $(m\geq 1)$ which do not satisfy the slope inequality. 
\end{rem}

\section{Non-complex Lefschetz fibration admitting a $(-1)$-section}
In this section, we prove Theorem~\ref{thm2}. 
\setcounter{section}{1}
\setcounter{thm}{1}
\begin{thm}\label{thm2}
For each $g\geq 4$ and each positive integer $n$, there is a genus-$g$ non-holomorphic Lefschetz fibration $f_{\widehat{U}_n} : X_{\widehat{U}_n} \to S^2$ with two disjoint $(-1)$-sections such that $X_{\widehat{U}_n}$ does not admit any complex structure with either orientation. 
\end{thm}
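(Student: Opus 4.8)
\textbf{Proof strategy for Theorem~\ref{thm2}.}
The plan is to mimic the structure of the proof of Theorem~\ref{thm3}, but starting from the relator $W_{2,g}$ in $\Gamma_g^2$ rather than from a chain relator, so that the resulting Lefschetz fibration comes equipped with \emph{two} disjoint $(-1)$-sections rather than one. Recall that $W_{2,g}$ lifts from $\Gamma_g$ to $\Gamma_g^2$ as a relator of the form $t_{a_{g+1}}t_{a_{g+1}'}=$ (product of right-handed Dehn twists), which is exactly the shape that produces two disjoint $(-1)$-sections after passing down to $\Gamma_g$. So the first step is to fix $W_{2,g}$ as the seed positive relator and record the basic invariants $e(X_{W_{2,g}})$ and $\sigma(X_{W_{2,g}})$ of the associated fibration; the signature can be computed either from Theorem~\ref{sign} (as $W_{2,g}$ is hyperelliptic for the appropriate choice of curves) or from the known geography of Matsumoto--Cadavid--Korkmaz fibrations.

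Next I would perform a sequence of substitutions on $W_{2,g}$ to introduce a free parameter $n$ while controlling the fundamental group. The idea is to do a lantern substitution, or a braid/partial-conjugation substitution, that changes one vanishing cycle so that the normal subgroup of $\pi_1(\Sigma_g)$ generated by the vanishing cycles (Lemma~\ref{lem2}) becomes $\mathbb{Z}\oplus\mathbb{Z}_n$ rather than trivial. Concretely, one expects to take partial conjugations by suitable diffeomorphisms $\psi$ supported away from the two boundary curves $a_{g+1}, a_{g+1}'$ (so the two $(-1)$-sections survive), combined with $L^{\pm 1}$-substitutions to rearrange the relevant cycles, and then insert an $n$-dependent ingredient — for instance by taking an $n$-fold "twisted" modification of the kind used in the Ozbagci--Stipsicz construction underlying Theorem~\ref{OSnoncomplex}. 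Throughout, each substitution must be carried out inside $\Gamma_g^2$ so that the lifted relator $t_{a_{g+1}}t_{a_{g+1}'}=(\text{positive word})$ is preserved, hence the two disjoint $(-1)$-sections persist; one then reads off the positive relator $\widehat{U}_n$ in $\Gamma_g$ by killing the two boundary curves.

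Having produced $\widehat{U}_n$, I would verify the three required properties. First, fiber sum indecomposability is automatic from Theorem~\ref{Indecomposable} once a $(-1)$-section exists. Second, $\pi_1(X_{\widehat{U}_n})=\mathbb{Z}\oplus\mathbb{Z}_n$: this follows from Lemma~\ref{lem2} by computing the quotient of $\pi_1(\Sigma_g)$ by the normal closure of the (explicitly listed) vanishing cycles of $\widehat{U}_n$ — a purely group-theoretic check using the relations~(\ref{c}) and~(\ref{0sh})--(\ref{2k}). Third, non-complexity with either orientation and non-holomorphicity follow immediately from Theorem~\ref{OSnoncomplex} applied to $\pi_1(X_{\widehat{U}_n})=\mathbb{Z}\oplus\mathbb{Z}_n$. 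For the "not homotopy equivalent" clause in the introduction's version, the fundamental groups $\mathbb{Z}\oplus\mathbb{Z}_n$ already distinguish the $X_{\widehat{U}_n}$ for distinct $n$. Finally I would note, as in the $\widehat{I}$ case, that an explicit monodromy factorization is obtained by spelling out the chosen $\psi$'s and the lantern/chain data, even though we only describe the procedure.

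\textbf{Main obstacle.} The delicate point is the simultaneous control of two things: keeping the substitutions supported in the interior of $\Sigma_g^2$ so that the two boundary Dehn twists $t_{a_{g+1}}, t_{a_{g+1}'}$ — hence the two disjoint $(-1)$-sections — are untouched, \emph{while} arranging the vanishing cycles so that their normal closure cuts $\pi_1(\Sigma_g)$ down to exactly $\mathbb{Z}\oplus\mathbb{Z}_n$ and not something smaller or larger. This is where the constraint $g\geq 4$ will enter (one needs enough genus to find disjoint embedded configurations of curves — e.g. a genus-one piece carrying the $\mathbb{Z}_n$ together with room for the lantern curves — in the complement of a neighborhood of $a_{g+1}\cup a_{g+1}'$). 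Getting the $n$-dependence to affect only the torsion of $H_1$ and nothing else in the monodromy bookkeeping (signature, Euler characteristic, the surviving sections) is the crux of the argument.
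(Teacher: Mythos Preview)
Your high-level framework is right and matches the paper: start from $W_{2,g}$ in $\Gamma_g^2$ so that the lifted relator has the shape $t_{a_{g+1}}t_{a_{g+1}'}=(\text{positive word})$, perform substitutions supported away from $\partial\Sigma_g^2$ so the two $(-1)$-sections survive, compute $\pi_1$ via Lemma~\ref{lem2}, and invoke Theorem~\ref{OSnoncomplex}. You also correctly isolate the crux: engineering the vanishing cycles so their normal closure yields exactly $\mathbb{Z}\oplus\mathbb{Z}_n$.

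Where your plan diverges from the paper is in the actual substitution mechanism, and this is where your proposal has a real gap. The paper does \emph{not} use lantern substitutions or chain relators here at all. Instead it exploits the nested structure of the Matsumoto--Cadavid--Korkmaz relators themselves: inside $W_{2,g}$ there appear Dehn twists along $c_r$ (for $g=2r$) or along $a_{r+1},a_{r+1}'$ (for $g=2r+1$), and these are precisely the ``boundary'' curves entering the smaller relator $W_{1,[g/2]}$ or $W_{2,[g/2]}$ on the genus-$[g/2]$ subsurface. One therefore performs two substitutions of this smaller MCK relator into $W_{2,g}$---one untwisted, and one partially conjugated by an explicit $\phi_n$ that contains a single factor $t_{a_k}^n$ (with $k$ roughly $[g/2]/2$). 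The fundamental-group computation then proceeds in three clean stages $G_1\to G_2\to G_3$: killing $\mathcal{B}_1^g$ collapses $\pi_1(\Sigma_g)$ to $\pi_1(\Sigma_{[g/2]})$, killing the untwisted copy $\mathcal{C}$ of the smaller MCK curves collapses further to $\pi_1(\Sigma_t)$ (with $t$ roughly $[g/2]/2$), and finally killing the $\phi_n$-twisted copy forces $a_1=\cdots=a_{t-1}=1$, $a_t^n=1$, and the appropriate $b_j=1$, leaving $\langle a_t,b_t\mid a_t^n,\,[a_t,b_t]\rangle\cong\mathbb{Z}\oplus\mathbb{Z}_n$. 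The constraint $g\ge4$ enters because one needs $[g/2]\ge2$ for this nesting to be nontrivial.

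Two smaller points. First, your instinct to record $\sigma$ and $e$ is unnecessary here: unlike Theorem~\ref{thm3}, non-holomorphicity in Theorem~\ref{thm2} comes entirely from $\pi_1$ via Theorem~\ref{OSnoncomplex}, and the slope plays no role. Second, your lantern-based route is not obviously doomed, but a single lantern move changes the vanishing cycles only locally and does not by itself introduce an $n$-parameter; you would still need something like the $\phi_n$-partial-conjugation idea, and at that point the lantern is superfluous. The paper's nested-MCK trick is what makes the $\pi_1$ computation tractable and is the missing idea in your outline.
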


We assume that $g \geq 4$ throughout this section. 
In order to prove Theorem~\ref{thm2}, we construct a relator $U_n$ in $\Gamma_g^2$ by applying substitutions to the relator $W_2^g$ in $\Gamma_g^2$, which gives the Lefschetz fibration $f_{\widehat{U}_n} : X_{\widehat{U}_n} \to S^2$.

Let $a_j,a_j^\prime, b_j$ and $c_j$ $(j=1,2,\ldots,g)$ be the simple closed curves on $\Sigma_g^2$ in Figure~\ref{generator}, \ref{Korkmaz-even} and~\ref{Korkmaz-odd}, and let $a_{g+1}$ and $a_{g+1}^\prime$ be the boundary curves of $\Sigma_g^2$ as before. 
The notation $[s]$ means the integer part of a real number $s$.

For a positive integer $n$, we define a map $\phi_n$ to be 
\begin{align*}
\phi_n=
  \left\{ \begin{array}{llll}
      \displaystyle t_{a_1}t_{a_2}\cdots t_{a_{k-1}}t_{a_k}^n \cdot t_{b_{k+2}}t_{b_{k+3}}\cdots t_{b_{2k}} & \ \ ([g/2]=2k) \\[3mm]
      \displaystyle t_{a_1}t_{a_2}\cdots t_{a_{k-1}}t_{a_k}^n \cdot t_{b_{k+3}}t_{b_{k+4}}\cdots t_{b_{2k+1}} & \ \ ([g/2]=2k+1).
      \end{array} \right.
\end{align*}
Note that $\phi_n(c_r)=c_r$ (resp. $\phi(a_{r+1})=a_{r+1}$ and $\phi(a_{r+1}^\prime)=a_{r+1}^\prime$) for $g=2r$ (resp. $g=2r+1$) 
and that $\phi_n(c_k)=c_k$ (resp. $\phi(a_{k+1})=a_{k+1}$ and $\phi(a_{k+1}^\prime)=a_{k+1}^\prime$) for $[g/2]=2k$ (resp. $[g/2]=2k+1$).

The relator $W_2^g$ in $\Gamma_g^2$ includes Dehn twist $t_{c_r}$ twice (resp. the product $t_{a_{r+1}}t_{a_{r+1}^\prime}$ of two Dehn twists  four times) if $g=2r$ (resp. $g=2r+1$). 
Therefore, we can apply $W_{1,h}$- and $W_{1,h}^{\phi_n}$- (resp. $W_{2,h}$- and $W_{2,h}^{\phi_n}$) substitutions to $W_2^g$ if $[g/2]=2t$ (resp. $[g/2]=2t+1$). 
Then, for even (resp. odd) $[g/2]$, we denote by 
\begin{align*}
U_n
\end{align*}
a relator which is obtained by applying once trivial and once $\phi_n$-twisted $W_1^h$- (resp. $W_2^h$-) substitutions to $W_2^g$. 
For the convenience of the reader we write the definition of the relator $U_n$ in detail. 
Let us consider the following word in $\Gamma_g^2$. 
\begin{align*}
&V_1:=
  \left\{ \begin{array}{ll}
      \displaystyle ( t_{B_{0,1}^{[g/2]}} t_{B_1^{[g/2]}} t_{B_2^{[g/2]}} \cdots t_{B_{[g/2]}^{[g/2]}} t_{c_t})^2  & \ \ ([g/2]=2t) \\[3mm]
      \displaystyle ( t_{B_{0,1}^{[g/2]}} t_{B_1^{[g/2]}} t_{B_2^{[g/2]}} \cdots t_{B_{[g/2]}^{[g/2]}} t_{a_{t+1}}^2 t_{a_{t+1}^\prime}^2 )^2  & \ \ ([g/2]=2t+1),
      \end{array} \right.\\[3pt]
&V_2:=
  \left\{ \begin{array}{ll}
      \displaystyle ( t_{B_{0,2}^{[g/2]}} t_{B_1^{[g/2]}} t_{B_2^{[g/2]}} \cdots t_{B_{[g/2]}^{[g/2]}} t_{c_t} )^2  & \ \ ([g/2]=2t) \\[3mm]
      \displaystyle ( t_{B_{0,2}^{[g/2]}} t_{B_1^{[g/2]}} t_{B_2^{[g/2]}} \cdots t_{B_{[g/2]}^{[g/2]}} t_{a_{t+1}}^2 t_{a_{t+1}^\prime}^2 )^2 & \ \ ([g/2]=2t+1).
      \end{array} \right.
\end{align*}
Note that $V_1=W_{1,[g/2]}t_{c_{[g/2]}}$ and $V_2= W_{2,[g/2]} t_{a_{[g/2]+1}^\prime}^{-1} t_{a_{[g/2]+1}}^{-1}$. 
Then, we can write $U_n$ as follows:
If $g=2r$, then 
\begin{align*}
U_n := (t_{B_{0,2}^g} t_{B_1^g} t_{B_2^g} \cdots t_{B_g^g} V_1) (t_{B_{0,2}^g} t_{B_1^g} t_{B_2^g} \cdots t_{B_g^g} V_1^{\phi_n}) t_{a_{g+1}}^{-1} t_{a_{g+1}^\prime}^{-1}, 
\end{align*}
and if $g=2r+1$, then 
\begin{align*}
U_n := (t_{B_{0,2}^g} t_{B_1^g} t_{B_2^g} \cdots t_{B_g^g} V_2 t_{a_{r+1}} t_{a_{r+1}^\prime}) (t_{B_{0,2}^g} t_{B_1^g} t_{B_2^g} \cdots t_{B_g^g} V_2^{\phi_n} t_{a_{r+1}} t_{a_{r+1}^\prime}) t_{a_{g+1}}^{-1} t_{a_{g+1}^\prime}^{-1}. 
\end{align*}

Since the relator $U_n$ in $\Gamma_g^2$ is a product of $t_{a_{g+1}}^{-1} t_{a_{g+1}^\prime}^{-1}$ and positive Dehn twists, it reduces to a positive relator of $\Gamma_g$, denoted by $\widehat{U}_n$. 
This gives a genus-$g$ Lefschetz fibration $f_{\widehat{U}_n}:X_{\widehat{U}_n}\rightarrow S^2$ with two disjoint $(-1)$-sections.

We now prove Theorem~\ref{thm2}. 
\begin{proof}[Proof of Theorem~\ref{thm2}]
It is sufficient to show that the fundamental group of $X_{\widehat{U}_n}$ is $\pi_1(X_{\widehat{U}_n}) =\mathbb{Z}\oplus\mathbb{Z}_n$ from Theorem~\ref{OSnoncomplex}.

Let $\mathcal{B}_s^h$ ($1 \leq h \leq g, s = 1,2$) be the following set of loops in $\pi_1(\Sigma_g)$: 
\begin{align*}
&\mathcal{B}_s^h:=
  \left\{ \begin{array}{ll}
      \displaystyle \{B_{0,s}^h,B_1^h,B_2^h,\ldots,B_h^h\} & \ \ (h=2r) \\[1mm]
      \displaystyle \{B_{0,s}^h,B_1^h,B_2^h,\ldots,B_h^h,a_{r+1},a_{r+1}^\prime\} & \ \ (h=2r+1).
      \end{array} \right.
\end{align*}
Note that $\mathcal{B}_1^g=\mathcal{B}_2^g$. 
We define the sets $\mathcal{C}$ and $\phi_n(\mathcal{C})$ of loops in $\pi_1(\Sigma_g)$ to be 
\begin{align*}
& \mathcal{C} :=
  \left\{ \begin{array}{ll}
      \displaystyle \mathcal{B}_s^{[g/2]}\cup \{c_t\} & \ ([g/2]=2t) \\[1mm]
      \displaystyle \mathcal{B}_s^{[g/2]} & \ ([g/2]=2t+1)
      \end{array} \right.,\ \ 
\phi_n(\mathcal{C}):=\{\phi_n(c) \mid c\in \mathcal{C}\},
\end{align*}
where $s=1$ if $g$ is even, and $s=2$ if $g$ is odd, and $\phi_n$ is the map as defined the above. 
Then, by Lemma~\ref{lem2}, we have
\begin{align*}
\pi_1(X_{\widehat{U}_n})=\pi_1(\Sigma_g)/\langle \mathcal{B}_1^g\cup \mathcal{C}_s^{[g/2]} \cup \phi_n(\mathcal{C}_s^{[g/2]})\rangle.
\end{align*}
Here, $\langle S \rangle$ means the normal closure of a subset $S$ of a group. 
For simplicity, we write 
\begin{align*}
&G_1:=\pi_1(\Sigma_g)/\langle \mathcal{B}_{1}^g \rangle, && G_2:=G_1/\langle \mathcal{C} \rangle &&\mathrm{and} && G_3:=G_2/\langle \phi_n(\mathcal{C}) \rangle.
\end{align*}
Note that  $G_3=\pi_1(X_{\widehat{U}_n})$.

First, we compute $G_1$. 
Suppose that $g=2r$. 
Since $c_g=1$, and the equalities (\ref{0sh}), (\ref{2k-1}) and (\ref{2k}) in Section~\ref{notation} with $h = g$ also become the identity in $G_1$, we obtain $a_k=a_{g+1-k}^{-1}$ for $1 \leq k \leq r$ after a routine computation. 
This gives 
\begin{align*}
&1=B_{2k-1}^g=b_k \cdot b_{k+1}b_{k+2}\cdots b_{g-k} \cdot b_{g+1-k}c_{g+1-k}, \ 1\leq k\leq r; \\
&1=B_{2k}^g=b_{k+1}b_{k+2}\cdots b_{g-k} \cdot c_{g-k}, \ 1\leq k\leq r.
\end{align*}
From these two equalities, we have $b_kc_{g-k}^{-1}b_{g+1-k}c_{g+1-k}=1$ ($1 \leq k \leq r$) and $c_r=1$.

On the other hand, by comparing the equality (\ref{c}) for $i = g+1-k$ with that for $i=g-k$ we obtain $c_{g+1-k}=b_{g+1-k}^{-1}c_{g-k}(a_{g+1-k}b_{g+1-k}a_{g+1-k}^{-1})$. 
Substituting this to $b_kc_{g-k}^{-1}b_{g+1-k}c_{g+1-k}=1$ we have $b_ka_{g+1-k}b_{g+1-k}a_{g+1-k}^{-1}=1$ ($1 \leq k \leq r$). 
We can track back the above argument conversely to show that $G_1$ has a presentation with generators $a_1,b_1,a_2,b_2,\ldots,a_g,b_g$ and with relations 
\begin{align*}
&c_g=c_r=1; \\
&a_k=a_{g+1-k}^{-1} \ \ \mathrm{and} \ \ b_k=a_{g+1-k}b_{g+1-k}^{-1}a_{g+1-k}^{-1}, \ \ 1\leq k\leq r; \\ 
&a_{g+1-k}=a_k^{-1} \ \ \mathrm{and} \ \ b_{g+1-k}=a_{g+1-k}^{-1}b_k^{-1}a_{g+1-k}, \ \ 1\leq k\leq r. 
\end{align*}
It turns out that this presentation is equivalent to the presentation with generators $a_1, b_1, a_2, b_2, \cdots, a_r, b_r$ and with relation $c_r=1$, namely, $G_1=\pi_1(\Sigma_r)$.

Now we suppose that $g=2r+1$. 
In this time we have $a_{r+1}=a_{r+1}^\prime=1$ since they belong to $\mathcal{B}_{1}^g$; hence $a_{r+1}^\prime=c_ra_{r+1}$ gives $c_r=1$. 
Having this a parallel argument as in the case of $g=2r$ shows that $G_1$ is isomorphic to $\langle a_1,b_1,a_2,b_2,\ldots,a_r,b_r \mid c_r=1 \rangle=\pi_1(\Sigma_r)$.

Next, we compute $G_2$. 
If $[g/2]=2t$, then by a similar argument as in the proof of $G_1=\pi_1(\Sigma_{[g/2]})$, we see that 
$G_2$ has a presentation with generators $a_1,b_1,a_2,b_2,\ldots,a_{2t},b_{2t}$ and with relations 
\begin{align*}
&c_{2t}=c_t=1; \\
&a_k=a_{[g/2]+1-k}^{-1} \ \ \mathrm{and} \ \  b_k=a_{[g/2]+1-k}b_{[g/2]+1-k}^{-1}a_{[g/2]+1-k}^{-1}, \ \ 1\leq k\leq t, 
\\ & a_{[g/2]+1-k} = a_k^{-1} \ \ \mathrm{and} \ \ b_{[g/2]+1-k} =a_{[g/2]+1-k}^{-1} b_k^{-1} a_{[g/2]+1-k}, \ \ 1\leq k\leq t, 
\end{align*}
i.e., $G_2=\pi_1(\Sigma_t)$. 
Similarly, if $[g/2]=2t+1$, then $G_2=\pi_1(\Sigma_t)$.

Finally, we compute $G_3$. 
Suppose that $[g/2]=2t$. 
It is easy to check that, up to conjugation, the following equalities hold in $\pi_1(\Sigma_g)$: 
\begin{align*}
&\phi_n(B_{0,s}^{2t})=a_t^na_{t-1} \cdots a_2a_1 B_{0,s}^{2t}; \\
&\phi_n(B_{2k-1}^{2t})=b_{2t+1-k}^{-1}a_t^na_{t+1}\cdots a_{k+1}a_k B_{2k-1}^{2t}, \ 1\leq k\leq t-1; \\
&\phi_n(B_{2k}^{2t})=b_{2t+1-k}^{-1}a_t^na_{t+1}\cdots a_{k+2}a_{k+1} B_{2k}^{2t}, \ 1\leq k\leq t-1; \\
&\phi_n(B_{2k-1}^{2t})=a_t^n B_{2t-1}^{2t}; \ \ \phi_n(B_{2t}^{2t})=B_{2t}^{2t}; \ \ \phi_n(c_t)=c_t.
\end{align*}
Noting that they vanish in $G_3$, a simple computation gives relations $a_1=a_2=\cdots=a_{t-1}=a_t^n=b_{t+2}=b_{t+3}=\cdots=b_{2t}=1$. 
Therefore, $G_3$ has a presentation with generators $a_1,b_1,\ldots,a_{2t},b_{2t}$ and with relations 
\begin{align*}
&a_k=a_{[g/2]+1-k}^{-1} \ \ \mathrm{and} \ \ 
b_k=a_{[g/2]+1-k}b_{[g/2]+1-k}^{-1}a_{[g/2]+1-k}^{-1}, \ \ 1\leq k\leq t; \\
& a_{[g/2]+1-k} = a_k^{-1} \ \ \mathrm{and} \ \ b_{[g/2]+1-k} =a_{[g/2]+1-k}^{-1} b_k^{-1} a_{[g/2]+1-k}, \ \ 1\leq k\leq t;\\ 
&a_1=a_2=\cdots=a_{t-1}=a_t^n=b_{t+2}=b_{t+3}=\cdots=b_{2t}=c_{2t}=c_t=1.
\end{align*}
This presentation is equivalent to the presentation with generators $a_t,b_t$ and with relations $a_t^n=a_tb_ta_t^{-1}b_t^{-1}=1$.

For $[g/2]=2t+1$, a similar computation gives relations $a_1=a_2=\cdots=a_{t-1}=a_t^n=b_{t+3}=b_{t+4}=\cdots=b_{2t+1}=1$. 
Hence, we see that $G_3$ has a presentation with generators $a_t,b_t$ and with relations $a_t^n=a_tb_ta_t^{-1}b_t^{-1}=1$.

This completes the proof. 
\end{proof}

\end{document}